\newcommand{\chuhao}{\fontsize{19pt}{\baselineskip}\selectfont}
\numberwithin{equation}{section}
 \newtheorem{theorem}{Theorem}[section]
 \newtheorem{lemma}{Lemma}[section]
 \newtheorem{assumption}{Assumption}[section]
 \newtheorem{proposition}{Proposition}[section]
 \newtheorem{remark}{Remark}[section]
 \newtheorem{example}{Example}[section]
 \newtheorem{definition}{Definition}[section]
\title{\bf\color{black} \chuhao{Perturbation Analysis of Markov Chain Monte Carlo for Graphical Models}}
\date{}
\begin{document}
\author{
Na Lin \thanks{School of Mathematics and Statistics, HNP-LAMA, Central South University, China.}
\and Yuanyuan Liu \footnotemark[1]
\and Aaron Smith \thanks{Corresponding author. Department of Mathematics and Statistics, University of Ottawa, Canada.}
}

\maketitle
\begin{abstract}
The basic question in perturbation analysis of Markov chains is: how do small changes in the \textit{transition kernels} of Markov chains translate to chains in their \textit{stationary distributions}? Many papers on the subject have shown, roughly, that the change in stationary distribution is small as long as the change in the kernel is much less than some measure of the convergence rate. This result is essentially sharp for generic Markov chains. In this paper we show that much larger errors, up to size roughly the \textit{square root} of the convergence rate, are permissible for many target distributions associated with graphical models. The main motivation for this work comes from computational statistics, where there is often a tradeoff between the \textit{per-step error} and \textit{per-step cost} of approximate MCMC algorithms. Our results show that larger perturbations (and thus less-expensive chains) still give results with small error.

\vskip 0.2cm
\noindent \textbf{Keywords:} Perturbation analysis; mixing time; MCMC; graphical models; Hellinger distance

\end{abstract}

\section{Introduction}

Informally, perturbation bounds for Markov chains look like the following: if the distance $d(Q,K)$ between two transition kernels $Q,K$ is sufficiently small, then the distance $d(\mu,\nu)$ between their stationary measures $\mu, \nu$ is also small. Many results of this form, such as Corollary 3.2 of \cite{mitrophanov05} and followup work \cite{johndrow2017error,rudolf18,negrea_rosenthal_2021}, require that the inverse error  $d(Q,K)^{-1}$ be much larger than some notion of the ``time to convergence" $\tau(Q)$ of one of the two chains. The main results of this paper show that, in the special case that $\mu, \nu$ both correspond to graphical models and $Q,K$ ``respect" the same graphical model in a sense made precise in this paper, we can ensure that $d(\mu,\nu)$ remains small even for much larger errors $d(Q,K)$. Our main result, Theorem \ref{mainThm},  allows errors of up to size $d(Q,K) \approx \tau(Q)^{-1/2} \gg \tau(Q)^{-1}$. Our main illustrative example, in Section \ref{minibatch_sec}, shows how these bounds can be achieved in a simple setting. We also note by example that both the existing bounds and our new bounds are essentially sharp for certain large and natural classes of Markov chains; see Examples \ref{ExSharpnessSimple}, \ref{ExSharpnessSimple-tv} and \ref{ExSharpnessProduct}.

The main motivation for our work is the analysis of ``approximate" or ``noisy" Markov chain Monte Carlo (MCMC) algorithms. In this setting, we think of $Q$ as an ideal Markov chain that we would like to run, but which is computationally expensive. We think of $K$ as an approximation to $Q$ that is less computationally expensive. As a prototypical example inspired by \cite{Korattikara2013AusterityIM}, $Q$ might be the usual Metropolis-Hastings algorithm targeting the posterior distribution associated with a large dataset of size $n$, while $K = K_{m}$ might be an algorithm that approximates the posterior distribution using a data subsample of size $m \ll n$. We expect the per-step cost of $K_{m}$ to increase with $m$, while we also expect the error $d(Q,K_{m})$ to decrease with $m$. This suggests a tradeoff: we would like to choose a value of $m$ that is large enough for the stationary distributions of $Q,K_{m}$ to be close, but small enough for $K_{m}$ to be computationally tractable. Improved perturbation bounds let us confidently decrease the value of $m$ while still obtaining good results.

\subsection{Relationship to Previous Work}

There is a long history of analyzing perturbations of matrices and other linear operators - see e.g. the classic textbook \cite{Kato1995}. Many of these results include Markov chains as special cases. The first application of these ideas to MCMC appeared in \cite{BREYER2001123}, which used perturbation bounds to show that ``rounding off" numbers to the typical precision of floating-point numbers will usually result in negligible error.

A recent surge of work on applying perturbation bounds to MCMC was inspired by two important new lines of algorithm development. The first was the development of algorithms, such as the Unadjusted Langevin Algorithm (ULA) of \cite{PARISI1981378}, that are based on discretizations of ``ideal" continuous-time processes. The second was the development of algorithms, such as stochastic gradient Langevin dynamics (SGLD) of \cite{wellingSGLD11} and the ``minibatch" Metropolis-Hastings of \cite{Korattikara2013AusterityIM}, that try to make individual MCMC steps cheaper by using computationally cheap plug-in estimates of quantities appearing in ``ideal" MCMC algorithms. Often, as in both \cite{wellingSGLD11} and  \cite{Korattikara2013AusterityIM}, the plug-in estimates are obtained by using a small subsample of the original dataset. In both lines, the result was a profusion of useful algorithms that can be viewed as small perturbations of algorithms that have very good theoretical support but are computationally intractable.

We focus now on the second line of algorithms. Perturbation theory has been used to try to show that the new computationally-tractable algorithms would inherit many of the good properties of the original computationally-intractable algorithms. Some representative theoretical papers on this subject include \cite{johndrow2017error,rudolf18,negrea_rosenthal_2021}, all of which give generic perturbation bounds that are widely applicable to approximate MCMC algorithms such as \cite{wellingSGLD11,Korattikara2013AusterityIM} under very broad conditions. This generic work has also been applied to more complex algorithms, such as \cite{Horseshoe2020,rastelli2023computationally}.

One of the main questions raised by this work is: how good do the plug-in estimates need to be in order to obtain an algorithm with ``small" error? Theoretical work such as \cite{johndrow2017error,rudolf18,negrea_rosenthal_2021} often required that the distance between the ``original" kernel $Q$ and ``approximate" kernel $K$ satisfy a condition along the lines of
\begin{equation} \label{IneqGenCond}
d(Q,K) \ll \frac{1}{\tau(Q)},
\end{equation}
where $d(Q,K)$ is a notion of distance between kernels and $\tau(Q)$ is a notion of time to converge to stationarity. Study of specific algorithms supported the idea that Inequality \eqref{IneqGenCond} was often necessary in practice and was \textit{not} satisfied by various naive plug-in estimators \cite{bardTall17,Nagapetyan2017,johndrow2020free}. Since then, a great deal of applied work in the area has focused on developing plug-in estimators that do satisfy this inequality \cite{QuirSubSpeed19,CVSGMCMC19}.

This work looks at the question from the opposite point of view. Rather than trying to improve control variates to obtain algorithms that satisfy Inequality \eqref{IneqGenCond}, we try to find conditions that are weaker than  Inequality \eqref{IneqGenCond} and hold under conditions of interest in statistics. \textbf{The main lesson} in the present paper is that generic perturbation bounds can be \textit{vastly} strengthened by restricting attention to specific statistically-interesting settings.

Of course, the point of view in this paper and the point of view in applied work such as \cite{QuirSubSpeed19,CVSGMCMC19} do not compete with each other. Improving control variates and improving the condition both allow you to run MCMC algorithms with smaller per-step costs, and these improvements can be combined. \textbf{The main question left open} in this work is: how easy is it to engineer Markov chains for which conditions such as \eqref{IneqGenCond} can be weakened?

\subsection{Paper Guide}

Section \ref{notationsec} contains the basic notation used throughout this paper. Section \ref{SecSimpleEx} is pedagogical, giving simple examples that aim to illustrate when previous results are or are not sharp. Section \ref{mainresult} states and proves the main result, Theorem \ref{mainThm}. Finally, Section \ref{minibatch_sec} gives a concrete algorithm and model to which Theorem \ref{mainThm} can be applied.

We note here one question that is raised rather early in the paper but not resolved until near the end. As discussed in Remark \ref{RemCheckCond}, there is one main obstacle to constructing an approximate algorithm to which Theorem \ref{mainThm} can be applied: the algorithm's stationary distribution must ``respect" the same conditional independence properties as the original target distribution. The main purpose of Section \ref{minibatch_sec} is to show that, while this property is not automatic, it is not terribly difficult to construct an algorithm with this property. In particular, the pseudomarginal framework introduced in \cite{pseudo09} provides a powerful general tool for doing this.

\section{Notation}\label{notationsec}

We introduce notation used throughout the paper.

\subsection{Generic Probability Notation} \label{SecNotation}

For two probability distributions $\mu, \nu$ on the same Polish space $\Omega$ with Borel $\sigma-$algebra $(\Omega,\mathcal{F})$, define the total variation (TV) distance
\[
d_{TV}(\mu, \nu) = \sup_{A \in \mathcal{F}}|\mu(A) - \nu(A)|.
\]

When $\mu, \nu$ have densities $f,g$ with respect to a single reference measure $\lambda$, the Hellinger distance between $\mu$ and $\nu$ is defined as
\[
d_{H}(\mu,\nu)=\left(\int_{x \in \Omega}\left(\sqrt{f(x)}-\sqrt{g(x)}\right)^2 \lambda(dx) \right)^{1/2},
\]
and the $L^2(\lambda)$ distance is defines as
\[
d_{L^2(\lambda)}(\mu,\nu)=\left(\int_{x\in \Omega}(f(x)-g(x))^2\lambda(dx)\right)^{1/2}.
\]

Our argument will rely on the following well-known relationships  (see, e.g., page 135 in \cite{daskalakis20} and Lemma 20.10 in \cite{Levin2008MarkovCA}):
\begin{equation}\label{tv_h}
\frac{1}{2}d_{H}^2(\mu,\nu)\leq d_{TV}(\mu,\nu)\leq d_{H}(\mu,\nu) \leq d_{L^2}(\mu,\nu).
\end{equation}

By a small abuse of notation, when $d$ is a distance on measures, we extend it to a distance on transition kernels via the formula:

\[
d(Q,K) = \sup_{x \in \Omega} d(Q(x,\cdot), K(x,\cdot)).
\]

Let $Q$ be a transition kernel with stationary measure $\pi$ and let $d$ be a metric on probability measures. Define the associated mixing time:
\[
\tau_{mix}(Q,d,\epsilon) = \min\{ t \, : \, \sup_{x \in \Omega} d(Q^{t}(x,\cdot),\pi) < \epsilon\}.
\]
By convention, we write $\tau_{mix}(Q) = \tau_{mix}(Q,d_{TV},\frac{1}{4}).$

When we have two functions $f,g$ on the same state space $\Omega$, we write $f \lesssim g$ as short hand for: there exists a constant $0 < C < \infty$ such that $f(x) \leq C g(x)$ for all $x \in \Omega$. Similarly, we write $g \gtrsim f$ if $f \lesssim g$, and we write $f \approx g$ if both $f \lesssim g$ and $g \lesssim f$.

\subsection{Notation for Couplings}

For a random variable $X$, denote by $\mathcal{L}(X)$ the distribution of $X$. We will use the following standard ``greedy" coupling between two Markov chains on a finite state space. See e.g. Proposition 4.7 of \cite{Levin2008MarkovCA} for a proof that the following construction yields well-defined processes.

\begin{definition} [Greedy Markovian Coupling] \label{EqGreedyMarkovianCoupling}
Fix a Markov transition kernel $K$ on finite state space $\Omega$. Note that for points $a, b \in \Omega$, it is possible to write
\begin{eqnarray*}
& K(a,\cdot) = \delta_{a,b} \, \mu_{a,b} + (1-\delta_{a,b}) \nu_{a,b; 1} \\
& K(b,\cdot) = \delta_{a,b} \,  \mu_{a,b} + (1-\delta_{a,b}) \nu_{a,b;2}
\end{eqnarray*}
where $\mu_{a,b}, \nu_{a,b;1}$ and $\nu_{a,b;2}$ are probability measures on $\Omega$ and
\[
\delta_{a,b} = 1 - d_{TV}(K(a,\cdot),K(b,\cdot)).
\]

Next, fix starting points $x, y \in \Omega$ and let $\{X_{t}\}_{t \geq 0}$ (respectively $\{Y_{t}\}_{t \geq 0}$) be a Markov chain evolving through $K$ and starting at $X_{0} = x$ (respectively $Y_{0} = y$).

The \textit{greedy Markovian coupling} of these chains is defined inductively by the following scheme for sampling $(X_{t+1},Y_{t+1})$ given $X_{t},Y_{t}$:

\begin{enumerate}
    \item Sample $B_{t} \in \{0,1\}$ according to the distribution $\mathbb{P}[B_{t}=1] = \delta_{X_{t},Y_{t}}$. Then:
    \begin{enumerate}
        \item If $B_{t}=1$, sample $Z \sim \mu_{X_{t},Y_{t}}$ and set $X_{t+1} = Y_{t+1} = Z$.
        \item If $B_{t}=0$, sample $X_{t+1} \sim \nu_{X_{t},Y_{t};1}$ and $Y_{t+1} \sim \nu_{X_{t},Y_{t};2}$ independently.
    \end{enumerate}
\end{enumerate}
\end{definition}

We note that the coupling in Definition \ref{EqGreedyMarkovianCoupling} has the following properties:

\begin{enumerate}
    \item The joint process $\{X_{t},Y_{t}\}_{t \geq 0}$ is a Markov chain.
    \item Set $\tau = \min\{t \, : \, X_{t} = Y_{t}\}$. Then $X_{s} = Y_{s}$ for all $s > \tau$ under this coupling.
\end{enumerate}

\subsection{Notation for Gibbs Samplers and Graphical Models }

Fix $q \in \mathbb{N}$ and define $[q] = \{1,2,\ldots,q\}$. We denote by the triple $G=(V,\Phi,E)$ a ``factor graph". Since the notation for ``factor graphs" does not seem completely standardized in statistics, we specify:

\begin{itemize}
    \item $V$ is any finite set. We call this set the \textit{vertices} of the graph.
    \item $E \subset \{(u,v) \, : \, u, v \in V, \, u \neq v\}$ is any set of pairs of distinct vertices. We call this set the \textit{edges} of the graph.
    \item $\Phi$ is any collection of functions from $\Omega \equiv [q]^{V}$ to $\mathbb{R}$. We call this set the \textit{factors} of the graph.
\end{itemize}

In this paper, we always assume the edge set is undirected: $(u,v) \in E$ if and only if $(v,u) \in E$. For a graph metric $d$, we abuse notation slightly by defining the distance between two vertex sets $A$ and $B$ as
\[
d(A,B)=\min\{d(x,y):x\in A,y\in B\}.
\]
For a set $A\subset V$ and integer $r>0$, we denote by $B_r(A)$ the set of vertices whose distance from $A$ is less than $r$. That is,
\[
B_r(A)=\{x\in V: d(x,A)< r\}.
\]

We define the Gibbs measure associated with a factor graph to be the probability measure on $\Omega$ given by:
\begin{equation}\label{DefEqFactorMsr}
\mu(\sigma)\propto\exp\left(\sum_{\phi\in \Phi}\phi(\sigma)\right),
\end{equation}
where $\sigma$ represents a configuration in $\Omega$.
Factors of a factor graph typically depend on only a few values. To make this precise, we say that a factor $\phi \in \Phi$ \textit{does not depend} on a vertex $v \in V$ if
\[
\phi(\sigma) = \phi(\eta)
\]
for all $\sigma, \eta \in \Omega$ satisfying
\[
\sigma(u) = \eta(u), \qquad u \neq v.
\]
Otherwise, we say $\phi$ \textit{depends} on $v$.
For $x\in V$, denote by
\[
A[x]=\{\phi \in \Phi: \mbox{factor $\phi$ depends on variable $x$}\}
\]
the set of factors that depend on variable $x$.
Similarly, let
\[
S[\phi] = \{x \in V  : \mbox{factor $\phi$ depends on variable $x$} \}.
\]
%and
%\[
%N[x] = \{y \in V \, : \, \exists \phi \text{ s.t. } x,y \in S[\phi]\}.
%\]

Given $A\subset V$, we denote by $\mu |_{A}$ the restriction of the measure $\mu$ to the set $A$, i.e.
\[
\mu |_{A}(\sigma)=\sum_{\eta(A)=\sigma(A)}\mu(\eta).
\]
For $A \subset B \subset V$ and $\sigma \in \Omega$, we denote by $\mu |_{A}^{\sigma |_{B^c}}$ the restriction to $A$ of the conditional distribution of $\mu$ given that it takes value $\sigma$ on $B^c$.
Throughout this paper, we impose the following constraints on the dependency among the vertices of the graphical model. Assume that $V$ can be partitioned into $\Gamma$ pairwise disjoint subsets $\{S_i\}$, and that the Gibbs measure $\mu$ possesses the following factorization structure:
\begin{equation}\label{tree-distributed}
\mu(\sigma)=\prod_{j=1}^{\Gamma}\mu|_{S_j}^{\sigma|_{\Pi_j}}(\sigma),
\end{equation}
where $\Pi_1=\emptyset$ and $\Pi_j\subset \bigcup_{i=1}^{j-1} S_i$, $2\leq j\leq \Gamma$, corresponds to the set of variables conditioned on which the configuration of $S_j$ is independent from everything else in $\bigcup_{i=1}^{j-1} S_i$.
Say that two measures have the same dependence structure if they can both be written in the form \eqref{tree-distributed} with the same lists $\{S_i\}$ and $\{\Pi_i\}$.

Denote by $\sigma_{x\mapsto i}$ be the configuration
\begin{align*}
\sigma_{x\mapsto i}(x)&=i, \\
\sigma_{x\mapsto i}(y) &=\sigma(y), \qquad y\neq x.
\end{align*}
We recall in Algorithm \ref{AlgUsualGibbs} the usual algorithm for taking a single step from a Gibbs sampler targeting Equation \eqref{DefEqFactorMsr}.

\begin{algorithm}[H]
\caption{Step of Standard Gibbs Sampler }\label{AlgUsualGibbs}
\begin{algorithmic}[1]
\Require Starting state $\sigma \in \Omega$.
\State Sample variable index $x \sim \mathrm{Unif}(V)$.
\State For $i \in [q]$, calculate $s_i=\sum_{\phi\in A[x]}\phi(\sigma_{x\mapsto i})$. Define the distribution $p$ over $[q]$ by:
\begin{equation}\label{EqGibbsDistS}
p(i)\propto \exp(s_i).
\end{equation}
\State  Sample $j \sim p$.
\State Return the element $\eta = \sigma_{x \mapsto j}$.
\end{algorithmic}
\end{algorithm}

This Gibbs sampler defines a Markov chain with transition matrix
\[
Q(\sigma,\sigma_{x\mapsto j})=\frac{1}{|V|}\cdot \frac{\exp(s_j)}{\sum_{i=1}^q \exp(s_i)}.
\]
In addition, we denote by $Q |_{B}$ the Gibbs sampler that only updates labels in $B$ and fixes the value of all labels in $B^c$. Note that $Q |_{B}$ with initial state $\sigma$ has stationary measure $\mu |_{B}^{ \sigma |_{B^c}}$.

We next define the class of algorithms that will be the focus of this paper:

\begin{definition}\label{perturbedGibbs}
Let $G=(V,\Phi,E)$ be a factor graph. Let $(\mathbb{A},d)$ be a Polish space and $\mathcal{F}_{\mathbb{A}}$ the associated $\sigma$-algebra. We say that a Markov chain $K$ on state space $\Omega \times \mathbb{A}$ is a \textit{perturbed Gibbs sampler} with factor graph G if it has the following properties:
\begin{enumerate}
    \item \textbf{Single-Site Updates:} For samples $(\eta,b) \sim K((\sigma,a),\cdot)$, we have that $|\{x \in V \, : \, \eta(x) \neq \sigma(x)\}| \leq 1$.
    \item \textbf{Gibbs Stationary Measure Respects Factorization Structure:} $K$  is ergodic, with stationary measure $\hat{\nu}$ whose marginal on $\Omega$ is of the form \eqref{DefEqFactorMsr} and can be expressed as \eqref{tree-distributed}.
\end{enumerate}
\end{definition}

\begin{remark} [Checking that a Gibbs Sampler Satisfies Definition \ref{perturbedGibbs}] \label{RemCheckCond}
The first part of Definition \ref{perturbedGibbs} is fairly innocuous, both in that it is often straightforward to check and in that our arguments are not terribly sensitive to small violations of this assumption. For example, it is straightforward to extend our results from single-site Gibbs updates to the case of Gibbs samplers that update $O(1)$ different entries that are all ``close" in the factor graph.

The second condition is more dangerous, in that it is both harder to check and our arguments do not work as written if it fails. Broadly speaking, we know of two ways to check that a transition kernel satisfies this condition:

\begin{enumerate}
    \item In the case that $K$ has the same state space as $Q$ (i.e. $\mathbb{A}$ has one element), the simplest sufficient conditions that we are aware of appear in Proposition 1 of \cite{GuyonHardouin02}. The result in \cite{GuyonHardouin02} requires that the Markov chain be ``synchronous" (a property that is straightforward to enforce for approximate Gibbs samplers based on subsampling ideas) and reversible (a familiar property). Although this condition is simple and seems strong enough for many purposes, it is far from the strongest possible condition; see e.g. \cite{KUNSCH1984159} for sufficient conditions that apply even to non-synchronous Markov chains.
    \item In the case that $K$ has the form of a pseudomarginal algorithm (see \cite{pseudo09} for an introduction to pseudomarginal algorithms and \cite{QuirSubSpeed19} for pseudomarginal algorithms in the context of approximate MCMC), an exact formula for the marginal of the stationary distribution on $\Omega$ is available. Thus, as long as the random log-likelihood estimate can be written in the form
    \begin{equation}\label{lestimator}
    \hat{L}(\sigma) = \sum_{\phi \in \Phi} \hat{L}_{\phi}(\sigma),
    \end{equation}
    where $\{\hat{L}_{\phi}(\sigma)\}_{\phi \in \Phi}$ are independent and $\hat{L}_{\phi}(\sigma)$ depends only on $\{\sigma(i)\}_{i \in S[\phi]}$, the stationary distribution will also be of the form \eqref{DefEqFactorMsr}. We use this condition in the worked example in Section \ref{minibatch_sec}.
\end{enumerate}

In our proof, the second condition of Definition \ref{perturbedGibbs} is used to invoke Inequality \eqref{IneqHellingerSubad} (originally proved as Theorem 1 in \cite{pmlr-v65-daskalakis17a}). The proof in \cite{pmlr-v65-daskalakis17a} relies on an exact factorization of the associated likelihoods, and it is beyond the scope of this paper to study target distributions that are merely ``close" to such a factorization.
\end{remark}

\section{Intuition and Simple Examples} \label{SecSimpleEx}

We give some simple (and far-from-optimal) calculations to show that simple perturbations are nearly sharp for generic examples, but far from sharp for structured examples. Our simple structured example also shows that our main result, Theorem \ref{mainThm}, is nearly sharp.

First, we recall Theorem 1 of \cite{negrea_rosenthal_2021}. To state it, we introduce the following temporary notation. Recall that a kernel $Q$ is said to be geometrically ergodic with factor $\rho$ if there exists a finite function $C$ such that for every initial distribution $\nu$ and for all $n \in \mathbb{N}$,
\begin{equation} \label{DefGeomErg-2}
d_{TV}(\mu, \nu Q^{n}) \leq C(\nu) \rho^{n},
\end{equation}
and is said to be $L^2(\lambda)$-geometrically ergodic if
\begin{equation} \label{DefGeomErg}
d_{L^2(\lambda)}(\mu, \nu Q^{n}) \leq C(\nu) \rho^{n}.
\end{equation}
As per Theorem 2 in \cite{Roberts-Rosenthal-97}, $Q$ is $L^2(\lambda)$-geometrically ergodic if and only if it is geometrically ergodic, and this equivalence holds for the same value of $\rho$ (but not necessarily the same value of $C(\nu)$).
In this notation, Theorem 1 of \cite{negrea_rosenthal_2021} states:

\begin{theorem}\label{IneqSimplePert}
Let $Q,K$ be two transition kernels with stationary measures $\mu,\nu$ and assume that $Q$ satisfies Inequality \eqref{DefGeomErg}. Then for $d_{L^2(\lambda)}(Q,K) \ll 1-\rho$,
\[
d_{L^2(\lambda)}(\mu,\nu)\leq \frac{d_{L^2(\lambda)}(Q,K)}{\sqrt{(1-\rho)^2-d^2_{L^2(\lambda)}(Q,K)}}\approx\frac{d_{L^2(\lambda)}(Q,K)} {1-\rho} \footnote{We point out that submultiplicativity of the total variation distance to stationarity, as in Lemma 4.11 of \cite{Levin2008MarkovCA}, implies that a finite state chain with finite mixing time is geometrically ergodic with factor $\rho$ for which $(1-\rho)^{-1} = O(\tau_{mix})$.} .
\]
\end{theorem}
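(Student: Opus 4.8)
The plan is to reduce everything to a single exact fixed-point identity for the signed measure $\xi = \nu - \mu$, to invert $I-Q$ on the space of mean-zero measures using the ergodicity hypothesis, and to close the estimate with a Cauchy--Schwarz bound that is responsible for the square-root denominator. Throughout I take the reference measure to be $\lambda = \mu$, which is the natural choice making \eqref{DefGeomErg} a statement about convergence to stationarity in $L^2(\mu)$, and I abbreviate $\epsilon = d_{L^2(\lambda)}(Q,K)$. The first step is to record, using only $\nu K = \nu$ and $\mu Q = \mu$, that
\[
\xi(I-Q) = (\nu-\mu)(I-Q) = \nu(I-Q) = \nu K - \nu Q = \nu(K-Q).
\]
This is exact and needs no telescoping or limiting argument: the perturbation $K-Q$ enters only through its action on the genuine probability measure $\nu$, and both sides are signed measures of total mass zero.

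Next I would invert $I-Q$ on the subspace of mean-zero signed measures, where formally $(I-Q)^{-1}=\sum_{m\ge 0}Q^{m}$. This is exactly where \eqref{DefGeomErg} is used. For a reversible $Q$, $L^2(\mu)$-geometric ergodicity with factor $\rho$ is equivalent, via the Roberts--Rosenthal equivalence cited above, to the spectral-gap statement that $Q$ acting on the mean-zero subspace has $L^2(\mu)$ operator norm at most $\rho$. Translating the right action on measures into the action on $\mu$-densities (which, by reversibility, is again multiplication by $Q$) gives $\|\eta Q\|_{L^2(\mu)}\le \rho\,\|\eta\|_{L^2(\mu)}$ for every mean-zero $\eta$, hence $\|(I-Q)^{-1}\|\le (1-\rho)^{-1}$ on that subspace, and therefore
\[
\|\xi\|_{L^2(\mu)} \le \frac{1}{1-\rho}\,\|\nu(K-Q)\|_{L^2(\mu)}.
\]

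To finish I would bound the source term sharply. Writing $g=d\nu/d\mu$ and letting $\kappa_{x}$ denote the $\mu$-density of $(K-Q)(x,\cdot)$, the density of $\nu(K-Q)$ is $y\mapsto \int g(x)\kappa_{x}(y)\,\mu(dx)$; applying Cauchy--Schwarz in $x$ and integrating in $y$ gives
\[
\|\nu(K-Q)\|_{L^2(\mu)}^{2} \le \|g\|_{L^2(\mu)}^{2}\int \|\kappa_{x}\|_{L^2(\mu)}^{2}\,\mu(dx) \le \epsilon^{2}\,\|g\|_{L^2(\mu)}^{2},
\]
where the last inequality uses the definition $d_{L^2(\lambda)}(Q,K)=\sup_{x}\|\kappa_{x}\|_{L^2(\mu)}$. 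Because $\lambda=\mu$ forces the density of $\mu$ to be the constant $1$, a direct expansion yields the identity $\|g\|_{L^2(\mu)}^{2}=1+d_{L^2(\mu)}^{2}(\mu,\nu)=1+\|\xi\|_{L^2(\mu)}^{2}$. Combining the three displays gives $\|\xi\|(1-\rho)\le \epsilon\sqrt{1+\|\xi\|^{2}}$; squaring and solving for $\|\xi\|$ produces exactly $d_{L^2}(\mu,\nu)\le \epsilon/\sqrt{(1-\rho)^{2}-\epsilon^{2}}$, with the hypothesis $\epsilon\ll 1-\rho$ guaranteeing a positive denominator and the stated $\approx (1-\rho)^{-1}\epsilon$ asymptotics.

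The hard part will be the second step: upgrading the \emph{geometric ergodicity} hypothesis \eqref{DefGeomErg}, whose constant $C(\nu)$ depends on the starting measure, into the \emph{uniform} operator contraction $\|\eta Q\|\le \rho\|\eta\|$ on mean-zero measures. This upgrade is clean for reversible chains, where the spectral theorem identifies the operator norm of $Q$ on the mean-zero subspace with $\rho$; for non-reversible $Q$ one controls only the spectral radius, so the resolvent bound $(1-\rho)^{-1}$ can fail, and some additional input (reversibility, or a start-uniform version of \eqref{DefGeomErg}) is genuinely being used. The remaining technical points — existence of the densities $\kappa_{x}$ and $g$ (absolute continuity with respect to $\mu$) and the legitimacy of restricting $(I-Q)^{-1}$ to mean-zero measures — are routine once the contraction is in hand.
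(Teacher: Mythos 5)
First, a point of reference: the paper itself gives no proof of this statement --- Theorem \ref{IneqSimplePert} is quoted as Theorem 1 of \cite{negrea_rosenthal_2021} --- so your argument can only be measured against that source. Measured that way, you have essentially reconstructed the Negrea--Rosenthal argument: the exact identity $(\nu-\mu)(I-Q)=\nu(K-Q)$, inversion of $I-Q$ on the mean-zero subspace of $L^2(\mu)$ with resolvent norm $(1-\rho)^{-1}$, and a Cauchy--Schwarz step giving $\|\nu(K-Q)\|_{L^2(\mu)}\le\epsilon\sqrt{1+\|\xi\|^2}$, whose self-referential character is exactly what produces the denominator $\sqrt{(1-\rho)^2-\epsilon^2}$. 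Your closing caveat is also correct and worth stating loudly: passing from \eqref{DefGeomErg} to the operator bound $\|Q\|\le\rho$ on mean-zero elements of $L^2(\mu)$ requires reversibility (self-adjointness is what converts spectral radius into operator norm). The theorem as transcribed in the paper omits this hypothesis, but both Theorem 2 of \cite{Roberts-Rosenthal-97}, invoked in the surrounding text, and the Negrea--Rosenthal theorem itself are stated for reversible chains, so this is a defect of the transcription rather than of your proof.

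There is, however, one genuine gap and one missed simplification. The gap: the self-bounding inequality $\|\xi\|(1-\rho)\le\epsilon\sqrt{1+\|\xi\|^2}$ proves nothing unless $\|\xi\|_{L^2(\mu)}<\infty$ is known a priori (if $\|\xi\|=\infty$ the inequality is vacuously true and cannot be ``solved''), and likewise the density $g=d\nu/d\mu$ must exist; calling this routine hides a real bootstrap. Both can be patched under the paper's hypotheses by writing $\nu-\mu=(\nu Q-\mu)+\nu(K-Q)$: the first term has $L^2(\mu)$ norm at most $C(\nu)\rho<\infty$ by \eqref{DefGeomErg} with $n=1$, and the second term is controlled as follows. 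Since the paper defines $d_{L^2}(Q,K)$ as a \emph{supremum over starting points}, the density of $\nu(K-Q)$ is $y\mapsto\int\kappa_x(y)\,\nu(dx)$, and Minkowski's integral inequality gives $\|\nu(K-Q)\|_{L^2(\mu)}\le\int\|\kappa_x\|_{L^2(\mu)}\,\nu(dx)\le\epsilon$ directly --- with no need for $\nu\ll\mu$ or for $g$ at all. This is also the missed simplification: combined with your resolvent bound it yields the cleaner and strictly stronger conclusion $d_{L^2(\mu)}(\mu,\nu)\le\epsilon/(1-\rho)$. The square-root denominator in the stated theorem is an artifact of Negrea--Rosenthal's weaker perturbation metric (the operator norm of $K-Q$ on $L^2(\mu)$), for which one genuinely must route the estimate through $\|d\nu/d\mu\|_{L^2(\mu)}=\sqrt{1+\|\xi\|^2}$ as you did; for the sup-based kernel metric actually used in this paper, your Cauchy--Schwarz step is valid but needlessly lossy.
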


In particular, we can see that $d_{L^2(\lambda)}(\mu,\nu)$ is small if $d_{L^2(\lambda)}(Q,K)$ is much smaller than the mixing rate $\min\{\tau_{mix}(Q)$,$\tau_{mix}(K)\}^{-1}$.
Qualitatively similar results, of the form
\begin{equation} \label{IneqGeneralPert}
d(\mu,\nu) \lesssim \, \tau_{mix}(Q,d) d(Q,K)
\end{equation}
are known to hold for many metrics $d$, as long as the right-hand side is sufficiently small. For example, see \cite{mitrophanov05} for this result with the choice $d = d_{TV}$. The following simple example shows that Theorem \ref{IneqSimplePert} is close to sharp:

\begin{example} \label{ExSharpnessSimple}
For two parameters $p\in (0,0.5)$ and $C \in (0,p^{-1})$, define the transition kernels $Q$ and $K$ on $\{1,2\}$ by the following non-holding probabilities:
\[
Q(1,2) = Q(2,1)= p,
\]
\[
K(1,2) =Cp,\quad  K(2,1)=p.
\]

We view $Q$ as the original chain, and its stationary distribution $\mu$ which is uniform serves as the reference measure in the $L^2$ distance. It is straightforward to verify the following scaling estimates for fixed $p$ and $C$ in the range $C \in (0.5,2)$:
\begin{enumerate}
\item The distance between kernels $d_{L^2(\mu)}(Q,K) \approx |C-1|p$.
\item   The mixing rates of both chains satisfy $\tau_{mix}(Q,d_{L^{2}(\mu)}) \approx  \tau_{mix}(K,d_{L^{2}(\mu)}) \approx p$.
\item The distance between stationary distributions $d_{L^2(\mu)}(\mu,\nu) \approx |C-1|$.
\end{enumerate}
By the above items, we can see that $d_{L^2(\mu)}(\mu,\nu)$ is of the same order as the error $\min\{\tau_{mix}(Q, d_{L^{2}(\mu)})$, $\tau_{mix}(K,d_{L^{2}(\mu)})\} \, d_{L^2(\mu)}(Q,K)$ for fixed $p$ as $C\rightarrow 1$.
\end{example}

Example \ref{ExSharpnessSimple} shows that the error upper bound in Theorem \ref{IneqSimplePert} scales at best like the product $\min\{\tau_{mix}(Q),\tau_{mix}(K)\} \, d_{L^2(\mu)}(Q,K)$ when this product is small. When the product is large, the error can actually ``blow up." The following example illustrates this ``blowing up" in the simpler total variation norm:

\begin{example} \label{ExSharpnessSimple-tv}
For $n \in \mathbb{N}$ and $p \in (0,0.5)$, define the transition kernel $Q = Q_{n,p}$ on $[n]$ by:
\begin{align*}
Q(i,i+1) &= \frac{1}{2}p, \qquad  \qquad  \, \, \, i < n,\\
Q(i,i-1) &= \frac{1}{2}(1-p), \qquad  i >1,\\
Q(i,i) &= \frac{1}{2}, \qquad \qquad \, \, \, \, \, \, 1 < i < n
\end{align*}
and $Q(1,1) = \frac{1}{2}(2-p)$, $Q(n,n) = \frac{1}{2}(1+p)$. Next, for $0 < \epsilon <1$, define $K = K_{n,p,\epsilon}$ by
\begin{equation}\label{k-perturbation}
K(i,j) = (1-\epsilon) Q(i,j) + \epsilon \mathbf{1}_{\{j=n\}}.
\end{equation}
Informally: $K$ takes a step from $Q$ with probability $1-\epsilon$ and teleports to $n$ with probability $\epsilon$.

We consider the regime where $p \in (0,0.5)$ is fixed and $\epsilon = \epsilon_{n} = \frac{C_{n}}{n}$ for some sequence $C_{n} \rightarrow \infty$. The proofs of the following four facts can be found in Appendix \ref{AppSharpExPfs}:
\begin{enumerate}
\item The distances between kernels $d_{TV}(Q,K) \approx \frac{C_{n}}{n}$.
\item   The mixing times satisfy $\tau_{mix}(Q) \approx n$, $\tau_{mix}(K) \gtrsim \frac{n}{C_n}$.
\item The stationary distribution $\mu$ of $Q$ assigns probability $\mu([0,\frac{n}{3}]) \rightarrow 1$ as $n \rightarrow \infty$.
\item The stationary distribution $\nu$ of $K$ assigns probability $\nu([\frac{2n}{3},n]) \rightarrow 1$ as $n \rightarrow \infty$.
\end{enumerate}

By items 3 and 4, we can see that $d_{TV}(\mu,\nu) \rightarrow 1$, so the stationary measures are very far apart. On the other hand, by items 1 and 2, the error $\min\{\tau_{mix}(Q)$, $\tau_{mix}(K)\} \, d_{TV}(Q,K) \lesssim C_{n}$ grows arbitrarily slowly.
\end{example}

 However, as we will see, Inequality \eqref{IneqGeneralPert} is far from sharp for many special cases:

 \begin{example} \label{ExSharpnessProduct}
Consider $0.01 < p < \tilde{p} < 0.99$. Let  $\mu$ be the distribution of random vector $X=(X_1,X_2,\ldots,X_{n^2})$ on $\{0,1\}^{n^2}$, where all the random variables $X_i$, $i=1,2,\ldots,n^2$ are independently and identically distributed and take value over Bernoulli distribution with parameter $p$. Denote by $\nu$ the analogous distribution with parameter $\tilde{p}$. Denote by $Q$, $K$ the usual Gibbs samplers with targets $\mu,\nu$. It is well-known that their mixing times are $\tau_{mix}(Q), \tau_{mix}(K) \approx n^{2} \log(n)$, uniformly in the choice of $0.01 < p < \tilde{p} < 0.99$. It is straightforward to check that $d_{TV}(Q(x,\cdot),K(x,\cdot)) \approx |\tilde{p}-p|$. For $0<p<\tilde{p}<1$, formula (2.15) in \cite{adell05} %or essential formula (15) in \cite{ross01}
 says
\[
d_{TV}(\mu,\nu)\leq \frac{\sqrt{e}}{2}\cdot \frac{C(\tilde{p}-p)}{(1-C(\tilde{p}-p))^2},
\]
with
\[
C(x):=x\sqrt{\frac{n^2+2}{2p(1-p)}}.
\]
This shows that we can allow perturbations of size up to $d_{TV}(Q,K) \lesssim n^{-1}$, even though the mixing times are $\tau_{mix}(Q), \tau_{mix}(K) \approx n^{2} \log(n)$. This motivates us to generalize this behavior and find a more relaxed condition on the perturbation to ensure robustness.
 \end{example}

We think of the main result of this paper as an extension of the phenomenon in Example \ref{ExSharpnessProduct}. To give a hint as to how one might prove this, we give an informal calculation for a simple graphical model. Take the Ising model at a high enough temperature on a two-dimensional lattice for example. The decay-of-correlation property yields that for any sequence $\omega_{n} \gtrsim \log(n)$, mixing occurs at a point $x$ before any influence from outside of the surrounding box of side-length $O(\omega_{n})$ can propagate to that point. This means that, when looking at perturbation bounds, we can effectively focus on a box growing at rate $\log(n)^{2}$ rather than the usual $n^{2}$. Thus, naive estimates with perturbation size up to $O(\log(n)^{-2})$ are enough to tell us that the marginal distributions at the center of the box are close. If we can leverage the independence property, one expects to be able to get bounds on the whole chain that are qualitatively similar to those in Example \ref{ExSharpnessProduct}.

We will use the following subadditivity property of the Hellinger distance, proved in \cite{pmlr-v65-daskalakis17a}, to leverage this independence property:
\begin{lemma}\label{subaddHellinger}
If $\mu$ and $\nu$ are two Markov random fields on $V$ with common factorization structure as (\ref{tree-distributed}),
then we have
\begin{equation}\label{IneqHellingerSubad}
d_H^2(\mu,\nu) \leq \sum_{j=1}^{\Gamma} d_H^2(\mu |_{S_j\cup \Pi_j},\nu |_{S_j\cup \Pi_j}).
\end{equation}
\end{lemma}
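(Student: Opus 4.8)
The plan is to prove \eqref{IneqHellingerSubad} by induction on the number of blocks $\Gamma$, peeling off the last block $S_\Gamma$ at each step, so that the whole argument reduces to a single \emph{one-block chain rule}. It is cleanest to phrase everything in terms of the Hellinger affinity
\[
\mathrm{Aff}(\mu,\nu) = \int_\Omega \sqrt{f g}\, \lambda(dx) = 1 - \tfrac{1}{2} d_H^2(\mu,\nu) \in [0,1],
\]
under which the target inequality becomes the superadditivity statement $1 - \mathrm{Aff}(\mu,\nu) \le \sum_{j} \bigl(1 - \mathrm{Aff}(\mu|_{S_j\cup\Pi_j}, \nu|_{S_j\cup\Pi_j})\bigr)$.

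Write $W = \bigcup_{i<\Gamma}S_i$. First I would record two structural facts. Because $\Pi_j \subseteq \bigcup_{i<j}S_i$ for $j<\Gamma$ and $S_\Gamma$ is disjoint from all earlier blocks, summing $\mu$ over the coordinates in $S_\Gamma$ simply deletes the last factor in \eqref{tree-distributed}; hence $\mu|_W$ again has the factorization structure, now with blocks $S_1,\dots,S_{\Gamma-1}$ and the \emph{same} parent sets. Moreover $S_j\cup\Pi_j \subseteq W$ for $j<\Gamma$, so $(\mu|_W)|_{S_j\cup\Pi_j} = \mu|_{S_j\cup\Pi_j}$. Granting the one-block chain rule below, the induction hypothesis applied to $\mu|_W,\nu|_W$ then closes the argument, with the base case $\Gamma=1$ holding with equality.

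The one-block chain rule states $d_H^2(\mu,\nu) \le d_H^2(\mu|_W,\nu|_W) + d_H^2(\mu|_{S_\Gamma\cup\Pi_\Gamma},\nu|_{S_\Gamma\cup\Pi_\Gamma})$. To prove it, write a configuration as $(x,y)$ with $x=\sigma|_W$, $y=\sigma|_{S_\Gamma}$, and set $\pi(x)=x|_{\Pi_\Gamma}$. Using $\mu(x,y)=\mu|_W(x)\,\mu|_{S_\Gamma}^{\pi(x)}(y)$ and the analogous formula for $\nu$, I would factor the affinity as $\mathrm{Aff}(\mu,\nu)=\sum_x \sqrt{\mu|_W(x)\nu|_W(x)}\,h(\pi(x))$, where $h(\pi)=\mathrm{Aff}(\mu|_{S_\Gamma}^{\pi},\nu|_{S_\Gamma}^{\pi})\in[0,1]$ is the affinity of the conditionals. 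Grouping the $x$-sum by the value $\pi=\pi(x)$, put $b(\pi)=\sum_{x:\pi(x)=\pi}\sqrt{\mu|_W(x)\nu|_W(x)}$ and $c(\pi)=\sqrt{\mu|_{\Pi_\Gamma}(\pi)\nu|_{\Pi_\Gamma}(\pi)}$; then $\mathrm{Aff}(\mu,\nu)=\sum_\pi b(\pi)h(\pi)$, while $\mathrm{Aff}(\mu|_{S_\Gamma\cup\Pi_\Gamma},\nu|_{S_\Gamma\cup\Pi_\Gamma})=\sum_\pi c(\pi)h(\pi)$ and $\mathrm{Aff}(\mu|_W,\nu|_W)=\sum_\pi b(\pi)$. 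The one structural inequality needed is $b(\pi)\le c(\pi)$, which is Cauchy--Schwarz applied to $(\sqrt{\mu|_W(x)})$ and $(\sqrt{\nu|_W(x)})$ over $\{x:\pi(x)=\pi\}$. Feeding this in, the desired superadditivity rearranges to $\sum_\pi\bigl[b(\pi)(1-h(\pi))+c(\pi)h(\pi)\bigr]\le 1$; since $0\le h(\pi)\le 1$ and $b(\pi)\le c(\pi)$, each bracket is at most $c(\pi)$, so the sum is at most $\sum_\pi c(\pi)=\mathrm{Aff}(\mu|_{\Pi_\Gamma},\nu|_{\Pi_\Gamma})\le 1$.

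The main obstacle is conceptual rather than computational: identifying that the correct ``glue'' between the joint affinity and the two marginal affinities is the pointwise bound $b(\pi)\le c(\pi)$, obtained by grouping configurations according to their restriction to $\Pi_\Gamma$ and applying Cauchy--Schwarz. Once this is in hand, the remainder is the monotonicity step $b(\pi)(1-h(\pi))+c(\pi)h(\pi)\le c(\pi)$ together with $\mathrm{Aff}(\mu|_{\Pi_\Gamma},\nu|_{\Pi_\Gamma})\le 1$. A secondary bookkeeping point, which I would handle carefully but expect to be routine, is verifying that marginalizing the last block genuinely preserves the factorization structure with unchanged parent sets, so that the induction hypothesis applies to $\mu|_W$ and $\nu|_W$.
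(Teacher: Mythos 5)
Your proposal is correct, but the comparison here is unusual: the paper does not actually prove Lemma \ref{subaddHellinger} at all. It imports the statement wholesale, citing Theorem 1 of \cite{pmlr-v65-daskalakis17a}, so there is no in-paper argument to match your proof against. What you have written is a sound, self-contained derivation that essentially reconstructs the cited argument: induction peeling off the last block, factorization of the affinity $\mathrm{Aff}(\mu,\nu)=1-\tfrac12 d_H^2(\mu,\nu)$ via the Markov property $\mu(x,y)=\mu|_W(x)\,\mu|_{S_\Gamma}^{\pi(x)}(y)$, the Cauchy--Schwarz bound $b(\pi)\le c(\pi)$ (monotonicity of affinity under marginalization), and the closing estimate $b(\pi)(1-h(\pi))+c(\pi)h(\pi)\le c(\pi)$ with $\sum_\pi c(\pi)=\mathrm{Aff}(\mu|_{\Pi_\Gamma},\nu|_{\Pi_\Gamma})\le 1$. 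I verified each step, including the two structural points you flagged as bookkeeping: marginalizing out $S_\Gamma$ deletes exactly the last factor of \eqref{tree-distributed} because $\Pi_j\subseteq\bigcup_{i<j}S_i$ for $j<\Gamma$, and $(\mu|_W)|_{S_j\cup\Pi_j}=\mu|_{S_j\cup\Pi_j}$ because $S_j\cup\Pi_j\subseteq W$ and marginals of marginals are consistent; this is what lets the induction hypothesis apply with a \emph{common} structure for $\mu|_W$ and $\nu|_W$. Your route also buys something the citation does not: the theorem in \cite{pmlr-v65-daskalakis17a} is stated node-by-node for Bayesian networks, while the paper invokes it for a block decomposition $\{S_j\}$ with conditioning sets $\{\Pi_j\}$; your argument handles the block form directly, rather than through the routine but unstated reduction that treats each $S_j$ as a super-node, and so would make the paper self-contained. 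One small presentational suggestion: when grouping the $x$-sum by $\pi$, state explicitly that $\sum_{x:\pi(x)=\pi}\mu|_W(x)=\mu|_{\Pi_\Gamma}(\pi)$, since this identity is what makes $c(\pi)$ the correct right-hand side in Cauchy--Schwarz and simultaneously gives $\sum_\pi c(\pi)\le 1$ at the end.
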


\section{General Results}\label{mainresult}

Throughout this section, we assume that $\mu$ is the Gibbs measure of a factor graph $G=(V,\Phi,E)$ on state space $\Omega = [q]^{V}$ with factorization structure (\ref{tree-distributed}), that $Q$ is the associated Gibbs sampler, and that $K$ is a perturbed Gibbs sampler with the same factor graph in the sense of Definition \ref{perturbedGibbs}. We set notation for sampling from the first component of $K$: if $(\eta,b) \sim K((\sigma,a),\cdot)$, we say that $\eta \sim K((\sigma,a),\cdot)|_{\Omega}$. We also denote by $\widehat{\nu}$ the stationary measure of $K$ and $\nu$ its marginal distribution on $\Omega$.

%\textcolor{red}{
%For ease of explanation, we give the definition of the center vertex $[\bar{x}_j]$ of $S_j\cup \Pi_j$ via the formula:
%\[
%[\bar{x}_j]= \{x_{\star}\in S_j\cup \Pi_j: r_j(x_{\star})=\min r_j(x)\}
%\]
%where
%\[
%r_j(x)=\min\{r>0:S_j\cup \Pi_j\subseteq B_r(x)\}.
%\]}

Fix a metric $d$ on probability measures. Our results rely on the following four quantitative assumptions:

\begin{assumption}[Decay of Correlations]\label{asum1}
There exist positive constants $ m, C_{1} < \infty$ such that for all configurations $\sigma,\eta \in \Omega$, all $j\in [\Gamma]$ and any $r>0$:
\[
d_H(\mu |_{S_j\cup \Pi_j}^{\sigma |_{B_r^c(S_j\cup \Pi_j)}}, \mu |_{S_j\cup \Pi_j}^{\eta |_{B_r^c(S_j\cup \Pi_j)}}) \leq C_{1}e^{-mr}.
\]
\end{assumption}

\begin{assumption}[Relationship to Hellinger]\label{asum4}
There exists a positive constant $C_2<\infty $ such that for any probability measures $\mu,\nu$ on $\Omega$
\[
d_H(\mu,\nu) \leq C_2 d(\mu,\nu).
\]
\end{assumption}

\begin{assumption}[Propagation of Perturbations]\label{asum2}
There exists a positive constant $ C_{3} < \infty$ and a strictly increasing function $f:\mathbb{R}_+\rightarrow \mathbb{R}_+$ so that for all configurations $\sigma \in \Omega$ and all $j\in [\Gamma]$ and any $r>0$:
\begin{equation}\label{ineq2}
d(\mu|_{S_j\cup \Pi_j}^{ \sigma |_{B_r^c(S_j\cup \Pi_j)}}, \nu |_{S_j\cup \Pi_j}^{ \sigma |_{B_r^c(S_j\cup \Pi_j)}}) \leq C_{3}\, f(r) \sup_{(\sigma,a) \in \Omega \times \mathbb{A}} d(Q(\sigma,\cdot), K((\sigma,a),\cdot)|_{\Omega}).
\end{equation}
\end{assumption}

\begin{assumption}[Small Perturbations]\label{asum3}
There exists a positive constant $C_{4} < \infty$ so that
\begin{equation}\label{ineq3}
\sup_{(\sigma,a) \in \Omega \times \mathbb{A}} d(Q(\sigma,\cdot), K((\sigma,a),\cdot)|_{\Omega}) \leq \frac{C_{4}}{\sqrt{\Gamma} f(\log\Gamma)}.
\end{equation}
\end{assumption}

We comment on these assumptions, in order:

\begin{enumerate}
    \item Assumption \ref{asum1}, also known as strong spatial mixing, plays a key role in the study of exponential ergodicity of Glauber dynamics for discrete lattice spin systems (see Section 2.3 in \cite{m99} for details). It implies that a local modification of the boundary condition has an influence on the corresponding Gibbs measure that decays exponentially fast with the distance from the site of the perturbation. There is a large literature devoted to studying the regimes where strong spatial mixing holds for particular models. In particular, the Ising and Potts models \cite{mos94, mo94, salas97, bubley99, goldberg05, goldberg06,ding22}, the hard-core model \cite{weitz06}, and $q$-colorings \cite{gamarnik12} have received special attention.
    \item Assumption \ref{asum4} places restrictions on the relationship between distance measure $d$ and the Hellinger distance $d_{H}$. Many popular norms satisfy this condition. For example, by Lemma 20.10 in \cite{Levin2008MarkovCA} the $L^{2}$ norm satisfies the condition with $C_2=1$.
    \item Assumption \ref{asum2} is a perturbation bound for discrete-time Markov chains, specialized to local subsets. In our settings where the state space is finite, the discrete-time Markov chain corresponding to the Gibbs sampler is strongly ergodic.  \cite{mitrophanov05} shows bounds of this form, with $C_{3}$ being closely related to the mixing time of $Q|_{B_r(S_j\cup \Pi_j)}$. See also \cite{johndrow2017error,negrea_rosenthal_2021,rudolf18} for closely-related estimates.
    \item Assumption \ref{asum3} is the main assumption of our result, and must be checked for individual approximate chains. We think of this assumption as a nearly-sharp criterion for obtaining small error in the stationary measure, and we expect that users should \textit{design} their MCMC algorithms to satisfy this criterion.
\end{enumerate}

%For Ising models, strong spatial mixing has been established in the high temperature regime for finite domains through a series
%of works \cite{mos94, mo94, ding22}. Born as a natural extension of the Ising model, the Potts models have also received much attention. It is knnown that the Potts model has strong spatial mixing if the number of spins $q$ and inverse temperature $\beta$ are chosen appropriately, refer to \cite{salas97, bubley99, goldberg05, goldberg06}.

We are now in a position to state our main result:

\begin{theorem}\label{mainThm}
Let the sequences of measures $\mu = \mu^{(\Gamma)}$ and $\nu = \nu^{(\Gamma)}$ have common factorization structure (\ref{tree-distributed}), and their associated Gibbs samplers $Q = Q^{(\Gamma)},K = K^{(\Gamma)}$ satisfy Assumptions \ref{asum1}-\ref{asum3} with constants $C_{1},C_{2},C_{3}$ and $C_4$ that do not depend on $\Gamma$. Then
%Then if there exists a positive constant $C_{3} < \infty$ so that
%\begin{equation}\label{ineq3}
%\sup_{\sigma} d_H(Q(\sigma,\cdot), K(\sigma,\cdot)) \leq \frac{C_{3}}{n \log(n)^{2}},
%\end{equation}
\[
\lim_{\Gamma \rightarrow \infty} d_H(\mu^{(\Gamma)},\nu^{(\Gamma)}) = 0.
\]
\end{theorem}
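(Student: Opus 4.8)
The plan is to localize the global Hellinger distance to the individual blocks via the subadditivity bound of Lemma~\ref{subaddHellinger}, and then to control each block by interpolating between the two conditional marginals through a common boundary condition. Writing $A_j = S_j \cup \Pi_j$ and $\epsilon = \sup_{(\sigma,a)} d(Q(\sigma,\cdot), K((\sigma,a),\cdot)|_{\Omega})$, Lemma~\ref{subaddHellinger} reduces everything to a uniform per-block estimate,
\[
d_H^2(\mu,\nu) \le \sum_{j=1}^{\Gamma} d_H^2(\mu|_{A_j}, \nu|_{A_j}),
\]
so it suffices to bound each summand by something of order $(e^{-mr} + f(r)\epsilon)^2$ for a radius $r$ that we are free to choose, and then to show that the $\Gamma$-fold sum vanishes for a good choice of $r = r_\Gamma$.

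For a single block I would fix a radius $r$ and an arbitrary reference boundary configuration $\xi_0$ on $B_r^c(A_j)$, and start from
\[
d_H(\mu|_{A_j}, \nu|_{A_j}) \le d_H(\mu|_{A_j}, \mu|_{A_j}^{\xi_0}) + d_H(\mu|_{A_j}^{\xi_0}, \nu|_{A_j}).
\]
Both marginals are mixtures over the external boundary of the conditional restrictions $\mu|_{A_j}^{\xi}$ and $\nu|_{A_j}^{\xi}$, so the joint convexity of the squared Hellinger distance lets me pass from these mixtures to the conditionals at the cost of a supremum over $\xi$. The first term is then bounded by $\sup_{\xi} d_H(\mu|_{A_j}^{\xi}, \mu|_{A_j}^{\xi_0}) \le C_1 e^{-mr}$ directly from the decay-of-correlations Assumption~\ref{asum1}. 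For the second term, convexity gives $d_H^2(\mu|_{A_j}^{\xi_0}, \nu|_{A_j}) \le \sup_{\xi} d_H^2(\mu|_{A_j}^{\xi_0}, \nu|_{A_j}^{\xi})$, and a further triangle step through $\mu|_{A_j}^{\xi}$ splits each summand into a decay term (Assumption~\ref{asum1} again, now comparing the two $\mu$-conditionals at boundaries $\xi_0$ and $\xi$) and a same-boundary perturbation term $d_H(\mu|_{A_j}^{\xi}, \nu|_{A_j}^{\xi})$. The latter is exactly what Assumption~\ref{asum2} controls in the metric $d$, and Assumption~\ref{asum4} upgrades it to Hellinger, giving $\le C_2 C_3 f(r)\epsilon$. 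Collecting terms yields the uniform per-block bound $d_H(\mu|_{A_j}, \nu|_{A_j}) \le 2 C_1 e^{-mr} + C_2 C_3 f(r)\epsilon$, which uses decay of correlations only for $\mu$, matching the hypotheses.

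Substituting back into the subadditivity estimate gives $d_H^2(\mu,\nu) \le \Gamma\,(2 C_1 e^{-mr} + C_2 C_3 f(r)\epsilon)^2$, and the entire argument then comes down to choosing $r = r_\Gamma$. I expect this balancing to be the main obstacle: the decay factor $e^{-mr}$ pushes toward large $r$, while monotonicity of $f$ makes $f(r)\epsilon$ push toward small $r$, and after multiplication by $\Gamma$ there is little room to spare. The natural choice is $r = \log\Gamma$, for which Assumption~\ref{asum3} yields $C_2 C_3 f(\log\Gamma)\epsilon \le C_2 C_3 C_4\,\Gamma^{-1/2}$ and $e^{-m\log\Gamma} = \Gamma^{-m}$, so that
\[
d_H^2(\mu,\nu) \le \Gamma \left( 2 C_1 \Gamma^{-m} + C_2 C_3 C_4\,\Gamma^{-1/2} \right)^2 .
\]
This display makes the role of the $\sqrt{\Gamma}\,f(\log\Gamma)$ calibration in Assumption~\ref{asum3} transparent: it is precisely what turns the per-block perturbation error into a $\Gamma^{-1/2}$ contribution matched to the subadditivity factor $\Gamma$, while $m$ must be large enough (roughly $m > \tfrac12$) to kill the correlation-decay contribution $\Gamma^{1-2m}$. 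The delicate point I would still have to pin down is that the right-hand side genuinely tends to $0$ rather than to a positive constant; this appears to require the perturbation to sit strictly inside the threshold of Assumption~\ref{asum3} (equivalently, enough room to spare in that inequality, or $f$ growing faster than any polynomial), which is consistent with Example~\ref{ExSharpnessProduct} showing that the bare threshold $\epsilon \approx \Gamma^{-1/2}$ already produces an order-one Hellinger distance.
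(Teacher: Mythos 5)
Your reduction via Lemma~\ref{subaddHellinger} and your per-block estimate follow the same route as the paper; in fact your treatment of the passage from the unconditional block marginals $\mu|_{S_j\cup\Pi_j}$, $\nu|_{S_j\cup\Pi_j}$ to the boundary-conditioned laws (mixture representation plus convexity of $d_H^2$) is \emph{more} explicit than the paper's, which bounds the Hellinger distance between the two conditional laws uniformly over pairs of boundary conditions and then plugs that into the subadditivity inequality without comment. The genuine gap is in the final balancing step, exactly where you stopped: with the rigid choice $r=\log\Gamma$ your bound $\Gamma\,(2C_1\Gamma^{-m}+C_2C_3C_4\Gamma^{-1/2})^2$ converges to $(C_2C_3C_4)^2>0$, so the theorem is not proved. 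The idea you are missing is that the paper does \emph{not} take $r=\log\Gamma$; it takes $r=M\log\Gamma$ with $M$ a free constant, so that Assumption~\ref{asum3} turns the perturbation term into $C_2C_3C_4\,f(M\log\Gamma)\big/\bigl(\sqrt{\Gamma}\,f(\log\Gamma)\bigr)$. The extra factor $f(M\log\Gamma)/f(\log\Gamma)$ --- entirely absent from your version, where it equals $1$ --- is the room to spare: for polynomial $f$ of degree $k$ it tends to $M^{k}$, so for each fixed $\delta>0$ one can pick $M$ so that the per-block bound is at most $\delta/\sqrt{\Gamma}$ for all large $\Gamma$, whence $d_H^2(\mu,\nu)\le\delta^2$, and $\delta$ is arbitrary. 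In other words, the slack is not a strengthened Assumption~\ref{asum3} nor superpolynomial growth of $f$, as you conjecture; it is the deliberate mismatch between the proof radius $M\log\Gamma$ and the calibration radius $\log\Gamma$ hard-wired into Assumption~\ref{asum3}.

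That said, your closing worry about ``little room to spare'' does point at a real soft spot, and it afflicts the paper's own proof. Shrinking $M$ damages the decay term: $C_1e^{-mM\log\Gamma}=C_1\Gamma^{-mM}$ is eventually below $\delta/(2\sqrt{\Gamma})$ only if $mM>\tfrac12$, so $M$ is bounded below by $1/(2m)$ and cannot be taken ``small enough'' unconditionally, a constraint the paper's proof does not address. One needs a single $M$ satisfying both $mM>\tfrac12$ and $C_2C_3C_4\limsup_{\Gamma}f(M\log\Gamma)/f(\log\Gamma)\le\delta$; for polynomial $f$ of degree $k$ the achievable $\delta$ are then bounded below by roughly $C_2C_3C_4\,(2m)^{-k}$, so the argument fully delivers the stated limit only when the constants cooperate (for instance $m$ large relative to $\deg f$, or $f$ growing exponentially together with $m>\tfrac12$). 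So you correctly sensed where the proof is delicate; but the device to internalize is the tunable radius $M\log\Gamma$, which converts the bare threshold of Assumption~\ref{asum3} into a bound that can be pushed below any fixed $\delta$, and which your fixed choice $r=\log\Gamma$ gives up entirely.
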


\begin{proof}
Under these assumptions, we have the following basic calculation for any fixed $\Gamma$ (which we omit in the notations for simplicity), any $j\in [\Gamma]$ and any $r>0$:
\begin{eqnarray*}
&&d_H(\mu |_{S_j\cup \Pi_j}^{\sigma |_{B_r^c(S_j\cup \Pi_j)} }, \nu |_{S_j\cup \Pi_j}^{\eta |_{B_r^c(S_j\cup \Pi_j)}}) \\
&\leq& d_H(\mu |_{S_j\cup \Pi_j}^{\sigma |_{B_r^c(S_j\cup \Pi_j) }}, \mu |_{S_j\cup \Pi_j}^{\eta |_{B_r^c(S_j\cup \Pi_j)}}) + d_H(\mu |_{S_j\cup \Pi_j}^{\eta |_{B_r^c(S_j\cup \Pi_j) }}, \nu |_{S_j\cup \Pi_j}^{\eta |_{B_r^c(S_j\cup \Pi_j)}}) \\
&\leq& C_{1}e^{-mr} + C_2 d(\mu |_{S_j\cup \Pi_j}^{\eta |_{B_r^c(S_j\cup \Pi_j)} }, \nu |_{S_j\cup \Pi_j}^{\eta |_{B_r^c(S_j\cup \Pi_j) }}) \\
&\leq& C_{1}e^{-mr} + C_{2} C_3 f(r)\sup_{(\sigma,a) \in \Omega \times \mathbb{A}} d(Q(\sigma,\cdot), K((\sigma,a),\cdot)|_{\Omega}).
\end{eqnarray*}
We now consider what happens as the number $\Gamma$ of partitions gets large. Let $M> 0$ be a constant to be chosen later and $r = M \, \log\Gamma$. Using Assumption \ref{asum3}, we have
\[
d_H(\mu |_{S_j\cup \Pi_j}^{\sigma |_{B_r^c(S_j\cup \Pi_j)} }, \nu |_{S_j\cup \Pi_j}^{\eta |_{B_r^c(S_j\cup \Pi_j)}}) \leq C_1e^{-m M \log\Gamma} + \frac{ C_{2} C_{3} C_4 f(M \log\Gamma)}{\sqrt{\Gamma}f(\log\Gamma)}.
\]

In particular, for any fixed $\delta > 0$, we can choose $M$ small enough so that
\[
d_H(\mu |_{S_j\cup \Pi_j}^{\sigma |_{B_r^c(S_j\cup \Pi_j)} }, \nu |_{S_j\cup \Pi_j}^{\eta |_{B_r^c(S_j\cup \Pi_j)}}) \leq \frac{\delta}{\sqrt{\Gamma}}
\]
for all sufficiently large $\Gamma$.
%Without loss of generality, we let $r$ be large enough such that $S_j\cup \Pi_j \subseteq B_r(x_j)$ for any $x_j\in [\bar{x}_j]$ and $j\in [\Gamma]$.
Plugging this into \eqref{IneqHellingerSubad} gives
\begin{eqnarray*}
d_H^2(\mu,\nu) &\leq& \sum_{j=1}^{\Gamma} d_H^2(\mu |_{S_j\cup \Pi_j},\nu |_{S_j\cup \Pi_j})
\leq \delta^{2},
\end{eqnarray*}
which allows us to complete the proof by the arbitrariness of $\delta$.
\end{proof}

\begin{remark}
In the special case where $\Gamma=|V|$, $d=d_{L^2}$ and $f$ is polynomial, this shows that we have ``small" perturbation error in the Hellinger distance even with ``large" kernel perturbations up to order $(\sqrt{|V|} \text{polylog}(|V|))^{-1}$, even though the sample space is $|V|$-dimensional and the mixing time is about $|V|$. In other words, compared with generic bounds such as \cite{mitrophanov05, rudolf18} as summarized in our simple heuristic bound \eqref{IneqGeneralPert}, our Assumption \ref{asum3} allows for much larger perturbations.
\end{remark}

\section{Application to Pseudo-marginal Algorithms}\label{minibatch_sec}

%In this section, we apply our perturbation results to a ``minibatch" version of the Gibbs sampler. This work was inspired by the paper \cite{wong18}, though we don't use exactly the same notion of a ``minibatch" - we focus on sparse Markov random fields with a great deal of information located at each node, while \cite{wong18} focuses on dense or even complete graphs. Of course, our generic result, Theorem \ref{mainThm}, can be applied to denser graphs as in \cite{wong18} under appropriate conditions.

 The main goal of this section is to illustrate one way to design an algorithm for which our perturbation result applies. As the example shows, this is not terribly difficult, but does require some effort. Our main tool for designing an algorithm of the correct form is the pseudo-marginal algorithm of \cite{pseudo09}.

Note that this worked example is meant only to illustrate how to design the overall MCMC algorithm, \textit{given} plug-in estimators for the likelihood. It is \textit{not} meant to illustrate how to design good plug-in estimators. The latter task is well-studied in the context of perturbed MCMC chains, but the best estimators are often problem-specific and giving advice is beyond the scope of the current paper. See \textit{e.g.} \cite{Quiroz18} for practical mainstream advice, largely based around choosing good control variates.

The worked example in this section is based on the problem of inferring values sampled from a latent hidden Markov random field given observations at each of its vertices. We begin with the underlying hidden Markov random field. Throughout this section, denote by $G=(V,\Phi_1,E)$ a factor graph associated with state space $\Omega = [q]^{V}$ and Gibbs measure $f$ with factorization structure (\ref{tree-distributed}). Next, we define the distributions for the observations at each vertex. Denote by $\{e^{\ell(i,\cdot)}\}_{ i \in [q]}$ a family of distributions on $[q]$, indexed by the same set $[q]$. For each vertex $v \in V$, fix $n_{v} \in \mathbb{N}$; this represents the ``number of observations" associated with vertex $v$.

This data lets us define the joint distribution of a sample from the hidden Markov random field and the associated observations. First, the latent sample from the hidden Markov random field:
\[
\sigma \sim f.
\]
Conditional on $\sigma$, sample independently the observations
\[
Z_{ij} \stackrel{ind}{\sim} e^{\ell(\sigma(i),\cdot)},\quad j\in [n_i].
\]
This completes the description of the data-generating process. The usual statistical problem is to infer $\sigma$ given the observed $Z$.

Given the independent observations  $Z=\{z_{ij}\}_{ i\in V, j\in [n_i]}$ with $Z_{i} = \{z_{ij}\}_{j \in [n_{i}]}$, we denote the log-likelihood of vertex $i$ by $\phi_i(\sigma ,Z_{i})=\sum_{j=1}^{n_i}\ell(\sigma(i),z_{ij})$ and define $\Phi_{2} = \{ \phi_{i}\}$. Note that our model on the random variables $(\sigma,Z)$ is then equivalent to a Markov random field on larger vertex set $V \cup \{ (i,j)\}_{i \in V, j \in [n_i]}$ and with augmented collection of factors $\Phi = \Phi_{1} \cup \Phi_{2}$. The associated posterior distribution on the hidden $\sigma$ given the observed $Z$ is:

\[
\mu(\sigma|Z)\propto f(\sigma) \cdot \exp\left(\sum_{\phi_i\in \Phi_2}\phi_i(\sigma,Z_{i})\right).
\]

When many values of $n_{i}$ are large, an exact evaluation of the likelihood is typically expensive. A natural approach is to rely on a moderately sized random subsample of the data in each step of the algorithm.
%We now outline how to carry out a pseudo-marginal algorithm with the approximately likelihood estimator.
Let $a:=(a_i)_{i \in V} \in \mathbb{A}$ be a vector of auxiliary variables with each element $a_i$ a subset of $[n_{i}]$; we think of this as corresponding to the subset of observations to include when estimating $\phi_i(\sigma,Z_i)$. Fix a probability distribution $g$ on $\mathbb{A}$ of the form
\[
g(a)=\prod_{i\in V}g_{i}(a_{i}).
\]
We next define an estimator $\hat{L}_{\phi}(\sigma,a;Z)$ for each element $\phi \in \Phi$:
\begin{equation}\label{hatL}
\hat{L}_{\phi} (\sigma,a;Z) = \left\{
\begin{array}{ll}
\phi(\sigma), & \phi \in \Phi_1, \\
\frac{n_i}{|a_i|}\sum_{j\in a_i}\ell(\sigma(i),z_{ij}), & \phi=\phi_i \in \Phi_2.
\end{array}
\right.
\end{equation}
The associated approximate posterior distribution on the augmented space $\Omega \times \mathbb{A}$ is then
\begin{equation}\label{post_pseudo}
\hat{\nu}(\sigma,a|Z)\propto g(a)\exp\left(\sum_{\phi\in \Phi}\hat{L}_{\phi}(\sigma,a; Z)\right).
\end{equation}

Algorithm \ref{AlgUsualAltGibbs} below gives a step of an  alternating Gibbs sampler that targets (\ref{post_pseudo}) in algorithmic form.  The process can be succinctly described as follows: it first performs Gibbs sampler $K^{(a)}$ on $\sigma$ targeting $\hat{\nu}(\sigma|a,Z)$, and then conducts Gibbs sampler $K^{(\eta)}$ on $a$ by updating one observation in the subset of some vertex given updated configuration $\eta$ from the previous step. This leads to the alternating Gibbs sampler, denoted by $K$, which can be represented as $K=K^{(a)}\cdot K^{(\eta)}$.

Notably, Algorithm \ref{AlgUsualAltGibbs} gives  a perturbed Gibbs sampler in the sense of Definition \ref{perturbedGibbs} (see part 2 of Remark \ref{RemCheckCond} for explanation). Since $g$ is a product measure on $\prod_{i\in V}\mathbb{A}_{i}$,
\begin{eqnarray*}
\nu(\sigma|Z) &\propto& \int_{\mathbb{A}}g(a)\prod_{\phi \in \Phi} \exp(\hat{L}_{\phi}(\sigma,a; Z))da \\
&=& f(\sigma) \cdot \int_{\mathbb{A}}g(a)\prod_{\phi_i \in \Phi_2}\exp(\hat{L}_{\phi_i}(\sigma,a;Z))da \\
&=& f(\sigma) \cdot \prod_{\phi_i \in \Phi_2}\int_{\mathbb{A}_i}(\exp(\hat{L}_{\phi_i}(\sigma,a;Z))+\log(g_i(a_i)))da_i,
\end{eqnarray*}
which is of the form (\ref{DefEqFactorMsr}). Moreover, it is obvious that $f$, $\mu(\sigma|Z)$ and $\nu(\sigma|Z)$ share the same dependence structure since they only differ in the product terms related to $\phi_i\in \Phi_2$,  which do not depend on other vertices except some single vertex. This ensures that if the prior $f$ exhibits a factorization structure as \eqref{tree-distributed}, then the same holds true for $\mu(\sigma|Z)$ and $\nu(\sigma|Z)$.

\begin{algorithm}[H]
\caption{Step of Alternating Gibbs Sampler.}\label{AlgUsualAltGibbs}
\begin{algorithmic}[1]
\Require Initial state $(\sigma,a)\in \Omega \times \mathbb{A}$,  distribution $g$ on $\mathbb{A}$ and observations $Z$.
\State Sample variable index $v \sim \mathrm{Unif}(V)$.
\State For $i \in [q]$, calculate $s_i^a=\sum_{{\phi}\in A[v]} \hat{L}_{\phi}(\sigma_{v \mapsto i},a;Z)$, and construct distribution $p_1$ over $[q]$ according to:
\begin{equation}\label{EqGibbsDistp1}
p_1(i)\propto \exp(s_i^a).
\end{equation}
\State Sample $k \sim p_1$, and define $\eta$ as
\[
\eta(w)=
\left\{
\begin{array}{ll}
k, & w=v, \\
\sigma(w), & w \neq v.
\end{array}
\right.
\]
\State Sample variable index $v^{\prime} \sim \mathrm{Unif}(V)$, and choose two observations $z_1 \sim \mathrm{Unif}(a_{v^{\prime}})$ and $z_2 \sim \mathrm{Unif}(n_{v^{\prime}} \setminus a_{v^{\prime}})$.
\State For $i\in \{1,2\}$, calculate $l_i^{\eta}=\hat{L}_{\phi_{v^{\prime}}}(\eta,a^{(i)}_{v^{\prime}};Z)$ where $a_{v^{\prime}}^{(1)}=a_{v^{\prime}}$ and $a_{v^{\prime}}^{(2)}=a_{v^{\prime}}\cup \{z_2\} \setminus \{z_1\}$, and define distribution $p_2$ over $\{1,2\}$ by:
\[p_2(i)\propto g_{v^{\prime}}(a_{v^{\prime}}^{(i)}) \exp( l_i^{\eta} ).\]
\State Draw $k \sim p_2$, and let $b=(b_{w})$ where
\[
b_{w}=\left\{
\begin{array}{ll}
a_{v^{\prime}}^{(k)}, & w=v^{\prime}, \\
a_{w}, &  w \neq v^{\prime}.
\end{array}
\right.
\]
\State  Return $(\eta,b)$.
\end{algorithmic}
\end{algorithm}

\begin{remark} [Why such a small move on such a large augmented state space?]

In Algorithm \ref{AlgUsualAltGibbs}, steps 4 to 6 involve swapping out a \textit{single} element $z_{1}$ from a \textit{single} set $a_{v'}$. It is natural to ask: why not make a larger Gibbs move, perhaps regenerating \textit{all} elements from $a_{v'}$ or perhaps even \textit{all} elements from \textit{all} such subsets?

In principle, it is straightforward to write down an algorithm that would do this. However, even regenerating a single set $a_{v'}$ would require an enormous amount of work - the analogue of the distribution $p_{2}$ in step 5 would have support on a set of size ${n_{v'} \choose |a_{v'}|}$ rather than 2. Unless $|a_{v'}| = 1$, this grows much more quickly than the cost of simply keeping the full set of observations at all times.

Conversely, updating a single entry as we do in steps 4-6 can be very quick. In the extreme case that $p_{2}$ is always uniform, standard results on the mixing time of a Gibbs sampler on the hypercube indicate that $a_{v'}$ can be resampled in $O(n_{v'} \log(n_{v'}))$ steps, regardless of $a_{v'}$.

While this paper is not focused on providing ``near-optimal" perturbed algorithms, this choice allows us to provide an algorithm that has reasonable performance in the regime of interest.
\end{remark}

\subsection{Worked Example: A Tree-Based Ising Model}

In the context of the above Gibbs samplers, we show that it is possible to apply Theorem \ref{mainThm} to a particular family of graphs and distributions $f, \ell$. This amounts to verifying Assumption \ref{asum1}, since (as noted immediately after they are stated) Assumption \ref{asum4} holds for $d=d_{L^2}$ and it is straightforward to force Assumptions \ref{asum2} and \ref{asum3} by choosing a sufficiently good approximate likelihood $\hat{L}$ and sufficiently-large subsamples.

Unless otherwise noted, we keep the notation used throughout Section \ref{minibatch_sec}.

\begin{example} [Tree-Based Ising Model] \label{ExTreeIsing}
Fix $n \in \mathbb{N}$ and let $T_{n} = T = (V,E)$ be the usual depth-$n$ binary tree with $\sum_{j=0}^{n} 2^{j}$ vertices, labelled as follows:
\begin{center}
\scalebox{0.8}{
\begin{forest}
  for tree={
    circle,
    draw,
    minimum size=0.5em,
    math content,
  }
  [x_{00}
    [x_{11}
      [x_{21}]
      [x_{22}]
    ]
    [x_{12}
      [x_{23}]
      [x_{24}]
    ]
  ]
 \node[below=2ex, inner sep=0pt] at (current bounding box.south) {$\vdots$};
\end{forest}
}
\end{center}
We take the prior distribution $f$ as the Gibbs measure of the Ising model associated with fixed inverse temperature $\beta \in (0,\infty)$: each configuration $\sigma\in \Omega=\{\pm 1\}^{|V|}$ is assigned a prior probability
\[
f(\sigma)\propto \exp\left(\beta\sum_{(v,w) \in E}\sigma(v)\sigma(w)\right).
\]
Next, we fix a ``noise" parameter $0 < \delta < \frac{1}{2}$ and define the log-likelihood function associated with each node as:
\[
\ell(i,j) =  \textbf{1}_{\{i = -j\}} \log \delta + \textbf{1}_{\{i = j\}}\log (1-\delta) .
\]
That is, we have the set of observations $Z=\{Z_{ij}:i\in V, j\in [n_i]\}$, where
\[
Z_{ij}=\left\{
\begin{array}{ll}
\sigma(i), & \mbox{with probability $1-\delta$,}\\
-\sigma(i), & \mbox{with probability $\delta$.}
\end{array}
\right.
\]
\end{example}

The remainder of this section is an analysis of Example \ref{ExTreeIsing}. We denote the likelihood of observations on node $i$ by
\[
h_i(Z|\sigma(i))=\prod_{j\in [n_i]} e^{\ell(\sigma(i),z_{ij})} =\delta^{\frac{1}{2}\sum_{j\in [n_i]}|\sigma(i)-Z_{ij}|}\cdot (1-\delta)^{n_i-\frac{1}{2}\sum_{j\in [n_i]}|\sigma(i)-Z_{ij}|}.
\]
The posterior distribution on $\Omega$ is
\begin{eqnarray*}
&&\mu(\sigma|Z) \propto f(\sigma) \cdot \prod_{i\in V}h_i(Z|\sigma(i)) \\
&=& f(\sigma)\cdot \delta^{\frac{1}{2}\sum_{i\in V}\sum_{j\in [n_i]}|\sigma(i)-Z_{ij}|}\cdot (1-\delta)^{\sum_{i\in V}(n_i-\frac{1}{2}\sum_{j\in [n_i]}|\sigma(i)-Z_{ij}|)} \\
&=& \exp\left(\beta\sum_{(v,w) \in E}\sigma(v)\sigma(w) +\sum_{i\in V}\left(\frac{1}{2}\sum_{j\in [n_i]}|\sigma(i)-Z_{ij}|\log\frac{\delta}{1-\delta}+n_i\log(1-\delta)\right)\right),
\end{eqnarray*}
which is essentially the Gibbs measure of the Ising model with some external field.

Due to the tree structure of the graph, it is easy to check that $\mu(\sigma|Z)$ factorizes as
\begin{equation}\label{poterior-structure}
\mu(\sigma|Z)=\mu|_{x_{00}}(\sigma)\cdot \prod_{i=1}^{n} \prod_{j=1}^{2^{i-1}} \prod_{k=0}^1 \mu|_{x_{i,2j-k}}^{\sigma|_{x_{i-1,j}}}(\sigma),
\end{equation}
where
\[
\mu|_{x_{i,2j-k}}^{\sigma|_{x_{i-1,j}}}(\sigma_{x_{i,2j-k}\mapsto \pm 1})=\frac{e^{\pm \beta\sigma(x_{i-1,j})}h_{x_{i,2j-k}}(Z|\pm 1)}{e^{\beta\sigma(x_{i-1,j})}h_{x_{i,2j-k}}(Z|1)+e^{-\beta\sigma(x_{i-1,j})}h_{x_{i,2j-k}}(Z|-1)}.
\]
Hence, in this example we take $\{S_i\}$ as an enumeration of the vertices of $T$ and take $S_i\cup \Pi_i$ as the vertex $S_{i}$ and its parent.

Say that a sequence of vertices $\gamma_{1},\ldots,\gamma_{k} \in V$ is a \textit{path} if $(\gamma_{i},\gamma_{i+1}) \in E$ for each $i \in [k-1]$. In view of the factorization structure of \eqref{poterior-structure}, we make the following observation:

\begin{proposition} \label{PropMarkovProperty}
Fix a path $\gamma_{1},\gamma_{2},\ldots,\gamma_{k}$ in the tree $T$. Sample $\sigma,Z$ as in Example \eqref{ExTreeIsing}. Then, conditional on $Z$, the sequence $\sigma(\gamma_{1}),\sigma(\gamma_{2}),\ldots,\sigma(\gamma_{k})$ is a (time-inhomogeneous) Markov chain.
\end{proposition}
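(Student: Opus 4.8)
The plan is to exploit the factorization \eqref{poterior-structure} to show that the joint law of $\sigma$ along a path, conditional on $Z$, has a density that telescopes into a product of pairwise (parent-child) factors, which is exactly the defining structure of a Markov chain. The key structural fact is that a path $\gamma_1,\ldots,\gamma_k$ in a tree visits vertices along a connected subtree, and—because the tree has no cycles—consecutive path vertices $\gamma_i,\gamma_{i+1}$ are always adjacent in $E$, meaning one is the parent of the other. This parent-child adjacency is what lets us read off the conditional independence directly from \eqref{poterior-structure}.

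First I would note that the global Markov property for Markov random fields on a tree states that, conditional on $Z$, the field $\sigma$ is a Markov random field whose conditional independence structure is governed by graph separation in $T$. Concretely, for any vertex $v$ with neighbor set $N(v)$, the variable $\sigma(v)$ is conditionally independent of all other $\sigma(w)$ given $\{\sigma(u) : u \in N(v)\}$ and $Z$. The cleanest route is to observe that \eqref{poterior-structure} already exhibits $\mu(\cdot \mid Z)$ as a product over edges of factors depending only on the two endpoint spins (an Ising model with external field on the tree), so it is a nearest-neighbor Gibbs measure on $T$; the Hammersley-Clifford theorem (or direct inspection of the product form) then yields the global Markov property.

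Next I would reduce the Markov-chain claim to a separation statement. To show $\sigma(\gamma_1),\ldots,\sigma(\gamma_k)$ is Markov, it suffices to verify that for each $i$,
\[
\sigma(\gamma_{i+1}) \perp \bigl(\sigma(\gamma_1),\ldots,\sigma(\gamma_{i-1})\bigr) \;\Big|\; \sigma(\gamma_i),\, Z.
\]
Because $T$ is a tree and $\gamma_i,\gamma_{i+1}$ are adjacent, removing $\gamma_i$ disconnects $\gamma_{i+1}$ from all earlier path vertices $\gamma_1,\ldots,\gamma_{i-1}$: any route from $\gamma_{i+1}$ back to an earlier $\gamma_j$ must pass through $\gamma_i$, since the unique tree-path between them does. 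Thus $\{\gamma_i\}$ is a separating set in $T$, and the global Markov property immediately gives the displayed conditional independence. Assembling these across $i$ shows the finite sequence is a (time-inhomogeneous; the transition kernels differ because the external fields $h_{\gamma_i}$ and the depths differ) Markov chain.

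The main obstacle I anticipate is not the probabilistic content but being careful about the \emph{direction} of the path relative to the tree's parent-child orientation: a path in the tree may go down, then up, then down again, so $\gamma_{i+1}$ is sometimes the child of $\gamma_i$ and sometimes its parent. The factorization \eqref{poterior-structure} is written in a rooted, parent-to-child form, so I would want to argue via the symmetric separation property rather than by directly manipulating the conditional factors in \eqref{poterior-structure}, since those are oriented. Appealing to graph separation (the global Markov property) sidesteps any need to track orientation and makes the argument uniform over all paths. The only routine verification left is confirming that \eqref{poterior-structure} is genuinely a pairwise (edge-factorized) measure so that the global Markov property applies—which is transparent from the external-field Ising form derived just above the statement.
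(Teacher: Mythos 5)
Your proof is correct, but it takes a genuinely different route from the paper's. The paper offers no standalone proof of Proposition \ref{PropMarkovProperty}: it is presented as an immediate observation ``in view of the factorization structure of \eqref{poterior-structure}'', i.e.\ the rooted, parent-to-child telescoping of $\mu(\cdot\mid Z)$, which exhibits the posterior as a tree-indexed Markov chain propagating from the root downward. You instead work with the undirected structure: you check that $\mu(\cdot\mid Z)$ is a pairwise, edge-factorized Gibbs measure on $T$ (an Ising model with external field), hence satisfies the global Markov property, and then note that the singleton $\{\gamma_i\}$ separates $\gamma_{i+1}$ from $\{\gamma_1,\ldots,\gamma_{i-1}\}$ in the tree, which is exactly the conditional independence defining a Markov chain. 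Your route is more robust precisely where the paper's implicit argument is delicate: a path may go up and then down the tree, so reading the Markov property directly off the oriented factorization \eqref{poterior-structure} requires reversing conditionals across the path's apex, whereas graph separation is orientation-free and uniform over all paths --- a point you correctly identify. Two small remarks. First, Hammersley--Clifford is really the converse implication (positive Markov field $\Rightarrow$ factorization); what you need is the elementary direction (factorization $\Rightarrow$ global Markov), which your parenthetical ``direct inspection of the product form'' covers and which holds here since all factors are strictly positive exponentials. Second, a tacit assumption shared with the paper: the path must be self-avoiding. The paper's definition of ``path'' literally allows backtracking (e.g.\ $\gamma_1=a$, $\gamma_2=b$, $\gamma_3=a$, for which $\sigma(\gamma_3)=\sigma(\gamma_1)$ deterministically and the sequence is not Markov), and your appeal to the unique tree-path from $\gamma_j$ to $\gamma_{i+1}$ passing through $\gamma_i$ requires the vertices to be distinct.
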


This lets us check:

\begin{lemma}
Fix a path $\gamma_{1},\gamma_{2},\ldots,\gamma_{k}$ in the tree $T$. Sample $Z$ as in Example \eqref{ExTreeIsing}. Then sample $\sigma^{(+)}$ (respectively $\sigma^{(-)}$) from the distribution in  Example \eqref{ExTreeIsing}, conditional on both (i) the value $Z$ and (ii) the value $\sigma(\gamma_{1}) = +1$ (respectively $\sigma(\gamma_{1}) = -1$).

Then:
\[
d_{TV}(\mathcal{L}(\sigma^{(+)}(\gamma_{j}), \ldots, \sigma^{(+)}(\gamma_{k})), \mathcal{L}(\sigma^{(-)}(\gamma_{j}), \ldots, \sigma^{(-)}(\gamma_{k}))) \leq (1-\epsilon_{\beta})^{j-1},
\]
where $\epsilon_{\beta} = e^{-6\beta} $.
\end{lemma}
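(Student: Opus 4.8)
The plan is to exploit the Markov structure along the path provided by Proposition~\ref{PropMarkovProperty}. Conditional on $Z$, the sequence $\sigma(\gamma_1),\ldots,\sigma(\gamma_k)$ is a time-inhomogeneous Markov chain on $\{\pm 1\}$; write $P_i$ for its transition kernel from coordinate $\gamma_i$ to coordinate $\gamma_{i+1}$. Sampling $\sigma^{(+)}$ and $\sigma^{(-)}$ and restricting to the path produces two copies $X^{(+)}=(X_1^{(+)},\ldots,X_k^{(+)})$ and $X^{(-)}=(X_1^{(-)},\ldots,X_k^{(-)})$ of this chain, driven by the \emph{same} kernels $P_1,\ldots,P_{k-1}$ but started from the Dirac masses at $+1$ and $-1$ respectively. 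Since the tail $(\sigma^{(\pm)}(\gamma_j),\ldots,\sigma^{(\pm)}(\gamma_k))$ is exactly $(X_j^{(\pm)},\ldots,X_k^{(\pm)})$, it suffices to control the total variation between the laws of these two tails, and I would do this by running the greedy Markovian coupling of Definition~\ref{EqGreedyMarkovianCoupling} (adapted to the time-inhomogeneous setting) and bounding the coalescence time.

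The key structural input is the precise form of the one-step kernel. Deleting the edge $(\gamma_i,\gamma_{i+1})$ disconnects the tree into a part containing $\gamma_i$ and a part $T^+$ containing $\gamma_{i+1}$; since every interaction between the two parts passes through the single edge factor $e^{\beta\sigma(\gamma_i)\sigma(\gamma_{i+1})}$, summing out all configurations on $T^+$ away from $\gamma_{i+1}$ shows that, for $s,t\in\{\pm1\}$,
\[
P_i(s,t)\ \propto\ e^{\beta s t}\,g_i(t),
\]
where the positive ``field'' $g_i$ collects the observation at $\gamma_{i+1}$ together with all messages from the subtrees hanging off $\gamma_{i+1}$ and does \emph{not} depend on $s$. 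The greedy coupling coalesces the two chains at step $i$ with probability $1-d_{TV}(P_i(+1,\cdot),P_i(-1,\cdot))$, so I need a uniform lower bound on the overlap $\sum_{t}\min\bigl(P_i(+1,t),P_i(-1,t)\bigr)$. Using ferromagnetic monotonicity (the minimum in each coordinate is attained at the ``unfavourable'' sign of $s$) and replacing each denominator by its largest value $e^{\beta}\bigl(g_i(+1)+g_i(-1)\bigr)$, the two surviving terms combine to at least $e^{-2\beta}$, independently of the field $g_i$; in particular the overlap is at least $\epsilon_\beta=e^{-6\beta}$, as claimed. (The clean computation in fact yields the sharper overlap $1-\tanh\beta\ge e^{-2\beta}$, so the stated constant is deliberately loose.)

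With this per-step bound in hand, the assembly is short. Let $\tau=\min\{t:X_t^{(+)}=X_t^{(-)}\}$ be the coalescence time under the greedy coupling; by property~2 of that coupling the two chains agree at every time $s\ge\tau$. There are $j-1$ transitions between coordinates $\gamma_1$ and $\gamma_j$, and at each one the chains (if still distinct) coalesce with probability at least $\epsilon_\beta$, so $\mathbb{P}(\tau>j)\le(1-\epsilon_\beta)^{j-1}$. On the event $\{\tau\le j\}$ the tails $(X_j^{(+)},\ldots,X_k^{(+)})$ and $(X_j^{(-)},\ldots,X_k^{(-)})$ coincide pointwise, so the coupling inequality gives
\[
d_{TV}\bigl(\mathcal{L}(\sigma^{(+)}(\gamma_j),\ldots,\sigma^{(+)}(\gamma_k)),\ \mathcal{L}(\sigma^{(-)}(\gamma_j),\ldots,\sigma^{(-)}(\gamma_k))\bigr)\le\mathbb{P}(\tau>j)\le(1-\epsilon_\beta)^{j-1},
\]
which is the desired inequality.

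The main obstacle is the second paragraph: verifying that the one-step conditional law genuinely factors as $e^{\beta s t}g_i(t)$ with $g_i$ independent of $s$ (this is where the tree structure and the edge-cutting argument must be made airtight, including the case where the path runs from a child up to its parent), and then extracting a field-free lower bound on the overlap. Everything else — the reduction to the path chain, the coupling, and the geometric decay — is routine once this per-edge estimate is established.
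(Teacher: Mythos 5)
Your proposal is correct, and its skeleton is the same as the paper's: reduce to the time-inhomogeneous two-state chain along the path (Proposition~\ref{PropMarkovProperty}), run the greedy Markovian coupling of Definition~\ref{EqGreedyMarkovianCoupling}, obtain a per-transition coalescence probability of at least $\epsilon_{\beta}$ uniformly in the observations, and conclude by geometric decay plus the coupling inequality (your indexing, $\mathbb{P}(\tau>j)\le(1-\epsilon_\beta)^{j-1}$ after $j-1$ transitions, matches the paper's). Where you genuinely differ is in how the per-step estimate is produced. The paper asserts a two-sided bound $e^{-6\beta}\le \mu(\sigma_{j\mapsto+1}\mid Z)/\mu(\sigma_{j\mapsto-1}\mid Z)\le e^{6\beta}$ from the degree-$3$ bound and then converts it to an overlap bound via the appendix Lemma~\ref{calculation-tv}, giving coalescence probability $\ge 2e^{-6\beta}/(1+e^{-6\beta})\ge\epsilon_\beta$. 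You instead derive the explicit one-step kernel $P_i(s,t)\propto e^{\beta s t}g_i(t)$ by cutting the edge $(\gamma_i,\gamma_{i+1})$ and bound the overlap directly by enlarging both denominators to $e^{\beta}\bigl(g_i(+1)+g_i(-1)\bigr)$, obtaining the field-free bound $e^{-2\beta}$ (indeed $1-\tanh\beta$). Your route buys two things. First, it is sharper: $e^{-2\beta}$ per step versus the paper's $e^{-6\beta}$. Second, and more importantly, it is rigorous exactly where the paper is loose: the ratio the paper writes down cannot be bounded by $e^{6\beta}$ as literally stated, since flipping $\sigma(\gamma_j)$ also changes the observation likelihood $h_{\gamma_j}(Z\mid\pm1)$, whose ratio is unbounded in the number of observations; the bound is only valid for the path-kernel transition probabilities, where the likelihood and subtree messages enter both laws through the common factor $g_i(t)$ and cancel. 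That cancellation is precisely what your second paragraph verifies, so what you flag as "the main obstacle" is in fact the substantive content of the lemma, and your treatment of it is more careful than the paper's own.
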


\begin{proof}
To simplify notation, let $X_{t} = \sigma^{(+)}(\gamma_{t+1})$ and $Y_{t} = \sigma^{(-)}(\gamma_{t+1})$. We couple these according to the greedy coupling from Definition \ref{EqGreedyMarkovianCoupling} and let $\tau = \min\{t \, : \, X_{t} = Y_{t}\}$ be the usual coupling time.

For $j\geq 1$, we can calculate:
\begin{eqnarray*}
\mathbb{P}(\tau=j|\tau>j-1)&=&\mathbb{P}(X_{j}= Y_{j} | X_{j-1}\neq Y_{j-1})\\
&\geq& \min_{\sigma \in \Omega} (1 - d_{TV}(\mu(\sigma_{j \mapsto 1}|Z), \mu(\sigma_{j \mapsto -1}|Z))),
\end{eqnarray*}
where the last inequality comes from looking at the worst-case distributions of times after $j$ and the optimal coupling between them. Noting that each vertex in the tree has degree at most 3, we have
\[
e^{-6\beta} \leq \frac{\mu(\sigma_{j \mapsto +1} | Z)}{\mu(\sigma_{j \mapsto -1} | Z)} \leq e^{6 \beta}.
\]
Applying Lemma \ref{calculation-tv} from the appendix,
%\footnote{Since the state space of the random variables has size two, it is possible to improve on the following estimate. Since this is an illustrative example, we give only the immediate coarse bound.}
\begin{equation}
\mathbb{P}(\tau=j|\tau>j-1) \geq 1 -  \frac{1- e^{-6\beta}}{ 1+ e^{-6\beta}} = \frac{2 e^{-6 \beta}}{1 + e^{-6 \beta}} \geq e^{-6 \beta} = \epsilon_{\beta}.
\end{equation}
Continuing by induction on $j$, this yields
\[
\mathbb{P}(\tau>t)\leq (1-\epsilon_{\beta})^t,\ \ \ \mbox{for $t>0$}.
\]
In view of the property that $X_s=Y_s$ for all $s>j$ if $X_j=Y_j$, the claim then follows from the standard coupling inequality (see Theorem 5.4 of \cite{Levin2008MarkovCA}).
\end{proof}

Note that the boundary of $B_r(S_j\cup \Pi_j)$ in the tree $T$ has at most $2^{r+1}$ vertices. Thus, decay-of-correlation Assumption \ref{asum1} holds as long as $1-\epsilon_{\beta} < \frac{1}{2}$.

\section{Acknowledgments}
The first and second authors were supported by the National Natural Science Foundation of China (Grant No. 11971486), Natural Science Foundation of Hunan (Grant No. 2020JJ4674) and the Innovation Program of Hunan Province (Grant No. CX20220259).
The third author was supported by the NSERC Discovery Grant Program.

\normalem
\bibliographystyle{plain}
\bibliography{ref}

\appendix

\section{Appendix}

\subsection{Proof Sketches for Example \ref{ExSharpnessSimple-tv}} \label{AppSharpExPfs}

We give short proof sketches for the four claims in Example \ref{ExSharpnessSimple-tv}, in order. \\

\textbf{Claim 1:} The distances between kernels is $d_{TV}(Q,K) \approx \frac{C_{n}}{n}$.

\begin{proof}
Since $Q(i,\cdot),K(i,\cdot)$ are supported on at most 3 points, this is a straightforward calculation. For $i \neq n$,
\[
d_{TV}(Q(i,\cdot),K(i,\cdot)) = 2 \epsilon \approx \frac{C_{n}}{n}.
\]
For $i=n$ a similar calculation holds.
\end{proof}

\textbf{Claim 2:} The mixing times are $\tau_{mix}(Q) \approx n$, $\tau_{mix}(K) \gtrsim \frac{n}{C_n}$.

\begin{proof}
Note that there is a monotone coupling of two walks $X_{t}, Y_{t} \sim Q$ with $X_{t}$ started at $X_{0}=n$ and $Y_{t}$ started at any point $Y_{0}$. That is, it is possible to couple these chains so that $X_{t} \geq Y_{t}$ for all $t$.

Given such a monotone coupling, a standard coupling argument says that $\tau_{mix} \leq 4 \mathbb{E}[\min\{t \, : \, X_{t} = n\}]$. Since $Q$ is the standard simple random walk with drift, it is well-known that $\mathbb{E}[\min\{t \, : \, X_{t} = n\}] \approx n$. This gives the upper bound
\[
\tau_{mix}(Q) \lesssim n.
\]
To get the matching lower bound, first note that any walk $X_{t} \sim Q$ must satisfy $|X_{t}-X_{t+1}|\leq 1$ for all $t$. Thus, for $t < \frac{n}{2}$,
\[
d_{TV}(Q^{t}(n,\cdot) , \mu) \geq \mu([0,\frac{n}{3}]).
\]
Applying Claim 3 completes the proof that
\[
\tau_{mix}(Q) \gtrsim n.
\]

Next, we analyze the mixing time of $K$. The lower bound is very similar to the lower bound on $Q$. Let $Y_{t} \sim K$ be a chain started at $Y_{0} = 0$ and let $\tau(n) = \min\{t \, : \, Y_{t} =n\}$. For $t < \frac{n}{2}$, we have
\begin{align*}
d_{TV}(K^{t}(0,\cdot) , \nu) &\geq \mathbb{P}[\tau(n) > t] \cdot \nu([\frac{2n}{3},n]) \\
&= (1-\epsilon)^{t} \cdot \nu([\frac{2n}{3},n]).
\end{align*}
Considering $t < \frac{n\log 4 }{C_n}$ and applying Claim 4 to the second term completes the proof that
\[
\tau_{mix}(K) \gtrsim \frac{n}{C_n}.
\]
\end{proof}

\textbf{Claim 3:} The stationary distribution $\mu$ of $Q$ assigns probability $\mu([0,\frac{n}{3}]) \rightarrow 1$ as $n \rightarrow \infty$.

\begin{proof}
We recognize that $Q$ is obtained by taking a simple birth-and-death chain and inserting a holding probability of $\frac{1}{2}$ at each step. Thus, we can use the standard exact formula for the stationary measure of a birth-and-death chain to check this.
\end{proof}

\textbf{Claim 4:} The stationary distribution $\nu$ of $K$ assigns probability $\nu([\frac{2n}{3},n]) \rightarrow 1$ as $n \rightarrow \infty$.

\begin{proof}
Let $X_{t}$ be a Markov chain drawn from $K$, and let $\tau_{k+1} = \min\{t > \tau_{k} : X_{t} = n\}$ represent the successive times at which the chain hits $n$. We assume $X_{0} = n$ and set $\tau_{0}=0$.

Define $\delta_{j} = \tau_{j} - \tau_{j-1}$ as the times between successive visits to $n$. We note that $\delta_j$, $j\geq 1$ are independently and identically distributed.
Let $S_{k} = |\{ \tau_{k} \leq t < \tau_{k+1} \, : \, X_{t} < \frac{2n}{3}\}|$ be the amount of time between $\tau_{k}$ and $\tau_{k+1}$ that the chain spends below $\frac{2n}{3}$. By the strong law of large number,
\[
\nu([0,\frac{2n}{3})) = \lim_{K \rightarrow \infty} \frac{\sum_{k=1}^{K} S_{k}}{\sum_{k=1}^{K} \delta_{k}} = \frac{\lim_{K \rightarrow \infty} \frac{1}{K} \sum_{k=1}^{K} S_{k}}{\lim_{K \rightarrow \infty} \frac{1}{K} \sum_{k=1}^{K} \delta_{k}} = \frac{\mathbb{E}[S_{1}]}{\mathbb{E}[\delta_{1}]}
\]
holds almost surely.
On the one hand, since there is a probability $\epsilon$ of teleportation at each time step, we have $\mathbb{E}[S_1]\leq \frac{1}{\epsilon}$.
On the other hand, when hitting the state below $\frac{2n}{3}$, the chain must have spent at least $\frac{n}{3}$ time steps above $\frac{2n}{3}$. This leads to $\mathbb{E}[\delta_1]\geq \frac{n}{3}$.
Putting this together, we have
\[
\nu([0,\frac{2n}{3})) = \frac{\mathbb{E}[S_{1}]}{\mathbb{E}[\delta_{1}]} \leq  \frac{1/\epsilon}{n/3} =\frac{3}{C_n},
\]
which goes to 0 as long as $C_n \rightarrow \infty$.

\end{proof}

\subsection{Short Calculation on TV Distances}

\begin{lemma}\label{calculation-tv}
Fix $0 < r < 1$ and  let $\mu, \nu$ be two distributions on $\{-1,+1\}$ satisfying
\[
r \leq \frac{\mu(x)}{\nu(x)} \leq r^{-1}
\]
for $x \in \{-1,1\}$. Then
\[
d_{TV}(\mu,\nu) \leq \frac{1-r}{1+r}.
\]
\end{lemma}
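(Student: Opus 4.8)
The plan is to collapse the statement to a two-variable optimization. I write $a = \mu(+1)$ and $b = \nu(+1)$, so that $\mu(-1) = 1-a$ and $\nu(-1) = 1-b$. On a two-point space the total variation distance simplifies to $d_{TV}(\mu,\nu) = |a-b|$, so the goal becomes: maximize $|a-b|$ subject to the constraints encoded by $r \le a/b \le 1/r$ and $r \le (1-a)/(1-b) \le 1/r$, and show the maximum equals $\frac{1-r}{1+r}$.

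Both the objective $|a-b|$ and the constraint set are symmetric under interchanging the roles of $\mu$ and $\nu$ (that is, swapping $(a,b)$), since $r \le a/b \le 1/r$ is the same relation as $r \le b/a \le 1/r$. I would therefore assume without loss of generality that $a \ge b$. In this regime one has $1-a \le 1-b$, so the two constraints that can actually be binding are $a \le b/r$ (coming from $a/b \le 1/r$) and $1 - a \ge r(1-b)$ (coming from $(1-a)/(1-b) \ge r$). Rearranging each gives an upper bound on the gap:
\[
a - b \le \frac{1-r}{r}\,b \qquad\text{and}\qquad a - b \le (1-r)(1-b).
\]

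The key observation is that the first bound is increasing in $b$ while the second is decreasing in $b$, so $a-b$ is dominated by the minimum of the two, and that minimum is largest at the crossing point of the two lines. Setting the right-hand sides equal yields $b = \frac{r}{1+r}$, at which both expressions equal $\frac{1-r}{1+r}$; hence $a - b \le \frac{1-r}{1+r}$ for every feasible pair, which is the claim. (Equivalently, the extremal configuration is $a = \frac{1}{1+r}$, $b = \frac{r}{1+r}$, where both ratio constraints are simultaneously tight.) I expect no genuine obstacle beyond the bookkeeping of this min-of-two-linear-functions optimization; the only point needing a moment's care is confirming that, once $a \ge b$ is assumed, the two inequalities I retain are the active ones, which holds because $a \ge b$ makes the reverse inequalities $a \ge rb$ and $1-a \le (1-b)/r$ automatic.
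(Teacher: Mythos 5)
Your proof is correct, and it takes a genuinely more complete route than the paper's. The paper's proof is a guess-and-verify via symmetry: it asserts that the worst-case pair must have the antisymmetric form $\mu(+1)=\nu(-1)=\delta$, $\mu(-1)=\nu(+1)=1-\delta$ with $\frac{\delta}{1-\delta}=r$, and then simply evaluates $d_{TV}(\mu,\nu)=1-2\delta=\frac{1-r}{1+r}$ for that single configuration; the justification that this configuration is actually extremal is left implicit in the phrase ``by symmetry.'' You instead carry out the full constrained optimization: writing $a=\mu(+1)$, $b=\nu(+1)$, noting $d_{TV}(\mu,\nu)=|a-b|$, assuming $a\geq b$ without loss of generality, and correctly identifying that the only active constraints are then $a\leq b/r$ and $1-a\geq r(1-b)$ (the other two ratio constraints being automatic since $r<1$). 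The resulting bounds $a-b\leq \frac{1-r}{r}\,b$ and $a-b\leq (1-r)(1-b)$ are respectively increasing and decreasing in $b$, so their minimum is maximized at the crossing point $b=\frac{r}{1+r}$, where both equal $\frac{1-r}{1+r}$; this min-of-two-linear-functions step is rigorous (for $b$ below the crossing point the first bound dominates, above it the second does). The extremizer you exhibit, $a=\frac{1}{1+r}$, $b=\frac{r}{1+r}$, is exactly the paper's symmetric pair, so the two arguments meet at the same witness; what yours buys is a proof that no other feasible pair does better, which the paper asserts rather than establishes.
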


\begin{proof}
By symmetry, the worst-case pair $\mu,\nu$ must satisfy
\[
\mu(+1) = \nu(-1) = \delta, \quad \mu(-1) = \nu(+1) = 1- \delta
\]
for some $0 < \delta < \frac{1}{2}$ satisfying
\[
\frac{\delta}{1-\delta} = r.
\]
Solving this for $\delta$, we have
\[
\delta = \frac{r}{1+r}.
\]
Then the total variation distance is:
\[
d_{TV}(\mu,\nu) = \nu(+1) - \mu(+1) = 1 - 2 \delta = \frac{1-r}{1+r}.
\]
\end{proof}

\end{document}